\documentclass[12pt,a4paper]{amsart}


\usepackage{euscript,amsfonts,amssymb,amsmath,amscd,amsthm,enumerate,hyperref,mathrsfs}

\usepackage{tikz,bm,float}
\usetikzlibrary{calc}
\colorlet{lgray}{white!85!black}
\colorlet{lred}{white!75!red}

\usepackage{comment}

\usepackage{graphicx}

\usepackage{color}

\usepackage[margin=1.1in]{geometry}
\newtheorem{theorem}{Theorem} 
\newtheorem*{theorem*}{Theorem}
\newtheorem{lemma}[theorem]{Lemma}

\newtheorem{proposition}[theorem]{Proposition}

\newtheorem{corollary}[theorem]{Corollary}

\theoremstyle{remark}
\newtheorem{remark}[theorem]{Remark}

\numberwithin{equation}{section} \numberwithin{theorem}{section}


\newcommand{\N}{\mathbb N}

\newcommand{\Z}{\mathbb Z}

\newcommand{\R}{\mathbb R}


\usepackage{MnSymbol}

\usepackage{skak}

\usepackage{adjustbox}
\usetikzlibrary{arrows}
\usetikzlibrary{decorations.pathreplacing}

\sloppy

\begin{document}

\title{Local central limit theorem for Mallows measure}

\author[Alexey Bufetov]{Alexey Bufetov}\address{Institute of Mathematics, Leipzig University, Augustusplatz 10, 04109 Leipzig, Germany.}\email{alexey.bufetov@gmail.com}

\author[Kailun Chen]{Kailun Chen}\address{Institute of Mathematics, Leipzig University, Augustusplatz 10, 04109 Leipzig, Germany.}\email{kailunxsx@gmail.com}

\begin{abstract}
We study the statistics of the Mallows measure on permutations in the limit pioneered by Starr (2009). Our main result is the local central limit theorem for its height function. We also re-derive versions of the law of large numbers and the large deviation principle, obtain the standard central limit theorem from the local one, and establish a multi-point version of the local central limit theorem.  
\end{abstract}

\maketitle

\section{Introduction}

\subsection{Main result}

Let $q \in [0;1)$. The Mallows measure on the set of permutations of size $N$ assigns a permutation $w$ the probability $q^{\mathrm{inv}(w)} Y_N$, where $\mathrm{inv}(w)$ is the number of inversions in $w$ and $Y_N$ is a normalization constant. For a permutation $w$ we define a \textit{height function}:
\begin{align*}
H_{L,K}(\omega) := \#\{ i : 1 \le i \le L, 1 \le w(i) \le K \}, \ \text{where}\ 1 \le L \le N,\ 1 \le K \le N. 
\end{align*}
Let us also define two functions $(0;1)^2 \to \R$:
\begin{align*}
h_{\beta}(x, y) := \frac{1}{\beta}\left[\ln(1-e^{-\beta})-\ln(e^{-\beta x}+e^{-\beta y}-e^{-\beta(x+y)}-e^{-\beta})\right],
\end{align*}
\begin{align*}
\sigma_{\beta}(x,y):=\sqrt{ \frac{ \beta (1- e^{-\beta})}{ (1- e^{- \beta(x-h_{\beta}(x, y))}) (1- e^{- \beta(y-h_{\beta}(x, y))}) } }.
\end{align*}

The main result of this paper is the following local central limit theorem.
\begin{theorem}
\label{th:main-intro}
Let $\alpha \in \R$, let $\beta \in \R_{>0}$, let $x,y \in (0;1)$, let $q=1-\beta/N$, and let $w$ be the random permutation of size $N$ distributed according to the Mallows measure. One has
$$
\lim_{N \to \infty} \frac{\sqrt{2 \pi N}}{ \sigma_{\beta}(x,y) }  \mathrm{Prob} \left( H_{\lfloor xN \rfloor,\lfloor yN \rfloor}(\omega) = \lfloor h_{\beta}(x, y) N + \alpha \sqrt{N} \rfloor \right) = \exp \left( -\frac{ \sigma_{\beta}^2(x,y) }{2} \alpha^2 \right). 
$$
\end{theorem}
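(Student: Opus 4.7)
The natural route is Fourier inversion. Writing
$$
\mathrm{Prob}\bigl(H_{L,K}(\omega)=m\bigr)=\frac{1}{2\pi}\int_{-\pi}^{\pi} e^{-i\theta m}\,\mathbb{E}\bigl[e^{i\theta H_{L,K}(\omega)}\bigr]\,d\theta
$$
with $L=\lfloor xN\rfloor$ and $K=\lfloor yN\rfloor$, the proof reduces to two ingredients: a pointwise Gaussian approximation of the characteristic function at scale $\theta=u/\sqrt{N}$, together with a uniform tail bound killing the contribution from $|\theta|\gg N^{-1/2}$.

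For the pointwise part, my plan is to exploit the explicit structure of the Mallows measure under $q=1-\beta/N$. The most convenient input should be a product or contour-integral formula for $\mathbb{E}[z^{H_{L,K}}]$, coming either from the $q$-exchangeable/sequential description of the Mallows permutation (\emph{cf.} the Gnedin--Olshanski independent-components representation of the inverse permutation) or from a direct $q$-binomial computation using the fact that $H_{L,K}$ is the count of indices $i\le L$ with $w(i)\le K$. Substituting $z=e^{i\theta}$ with $\theta=u/\sqrt{N}$ and performing a saddle-point analysis should then yield, uniformly on compact $u$-intervals,
$$
\mathbb{E}\!\left[\exp\!\left(\frac{iu}{\sqrt{N}}\bigl(H_{L,K}-h_\beta(x,y)N\bigr)\right)\right]\longrightarrow \exp\!\left(-\frac{u^{2}}{2\,\sigma_\beta^{2}(x,y)}\right),
$$
where the limiting variance $1/\sigma_\beta^{2}$ arises from the Hessian of the saddle-point functional; its explicit form should agree with the formula for $\sigma_\beta$ displayed before the theorem, essentially because that formula encodes the second derivative of $h_\beta$ with respect to a dual parameter associated with inserting $z$.

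The main obstacle is the off-zero tail estimate. The most robust route I see is to push the product/contour formula further: each factor in a $q$-product representation contributes a bound of the shape $|1-c_j+c_j e^{i\theta}|\le \exp\!\bigl(-c_j(1-\cos\theta)\bigr)$, and summing the $c_j$'s gives exponential decay on $[-\pi,\pi]\setminus[-\delta,\delta]$, complemented by a finer quadratic bound on $[-\delta,N^{-1/2+\epsilon}]$. If the product structure is not clean enough at the level of $H_{L,K}$ itself, a fallback is to couple $H_{L,K}$ with a sum of weakly dependent Bernoullis obtained from the sequential construction of $w$, run a Kolmogorov-type LCLT on the approximant, and control the coupling error separately.

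Once the pointwise convergence and tail bound are in place, the standard Fourier-inversion calculation produces the prefactor $\sigma_\beta(x,y)/\sqrt{2\pi N}$ and the Gaussian exponent $-\sigma_\beta^{2}(x,y)\alpha^{2}/2$, yielding Theorem~\ref{th:main-intro}. As consistency checks, averaging the statement against a smooth test function in $\alpha$ recovers the ordinary central limit theorem with variance $1/\sigma_\beta^{2}(x,y)$, while a Riemann-sum argument at $\alpha=0$ is compatible with Starr's law of large numbers $H_{\lfloor xN\rfloor,\lfloor yN\rfloor}/N\to h_\beta(x,y)$.
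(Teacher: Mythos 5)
Your proposal is a strategy outline rather than a proof, and the two steps you identify as essential are both left unestablished. First, the pointwise analysis hinges on ``a product or contour-integral formula for $\mathbb{E}[z^{H_{L,K}}]$''. No such clean factorization is available: the distribution of $H_{L,K}$ is a $q$-analogue of the hypergeometric distribution, whose probability generating function is a terminating ${}_2\phi_1$-type sum (the weight $q^{(K-s)(L-s)}$ prevents the $q$-binomial theorem from applying), and the sequential $q$-shuffle description does not yield independent Bernoulli increments because the success probability at step $i$ depends on the running count $K_i$. So the bound ``each factor contributes $|1-c_j+c_je^{i\theta}|\le\exp(-c_j(1-\cos\theta))$'' rests on a representation you have not produced and which most likely does not exist in that form; the fallback coupling with weakly dependent Bernoullis is a substantial separate project (an LCLT for a Markov additive functional), not a routine step. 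Second, you explicitly flag the off-zero tail estimate as ``the main obstacle'' and offer only contingent sketches for it. Finally, nothing in the proposal actually computes the centering $h_{\beta}(x,y)$ or verifies that the Hessian of your (unspecified) saddle-point functional equals $\sigma_{\beta}^{-2}(x,y)$; these are asserted to ``come out'' by consistency with the displayed formulas.

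For comparison, the paper avoids Fourier inversion entirely: Proposition \ref{prop:Height-Function} gives the point probability $\mathcal{M}_N^q(H_{L,K}=s)$ itself as an explicit ratio of $q$-Pochhammer symbols, so the local limit theorem reduces to direct asymptotics of $\ln(q^{\delta N+\alpha\sqrt{N}+1};q)_{\infty}$ via Euler--Maclaurin (formula \eqref{eq:lnqsqrtNq}), together with dilogarithm identities (Lemmas \ref{lem:Li2}, \ref{lem:rate function}, \ref{lem:lndelta}) showing that the $O(N)$ and $O(\sqrt{N})$ terms in the exponent vanish exactly at $\delta=h_{\beta}(x,y)$ and that the quadratic term produces $\sigma_{\beta}^2(x,y)\alpha^2/2$ with the correct $1/\sqrt{2\pi N}$ prefactor coming from $\ln(q;q)_{\infty}$. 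If you want to salvage your route, the honest starting point is that same exact formula --- but once you have it, the characteristic-function machinery becomes unnecessary.
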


On the way to the local central limit theorem (Theorem \ref{thm:LLT} below), we re-derive versions of the law of large numbers and the large deviation principle obtained by Starr \cite{Sta09} and Starr-Walters \cite{SW18} (Theorem \ref{thm:LDP}). We also establish the multi-point version of Theorem \ref{th:main-intro} (Theorem \ref{th:CLT-multi}), and the standard central limit theorem for $H_{\lfloor xN \rfloor,\lfloor yN \rfloor}$ (Corollary \ref{eq:CLT-global}). 

\subsection{Motivation and earlier work}

The Mallows measure was introduced in \cite{Mal57} among other interesting models of random permutations. In addition to its intrinsic elegance, it appears naturally in several contexts, such as the asymmetric simple exclusion process (=ASEP) and its mixing times (\cite{DR00}, \cite{BBHM05}, \cite{BN22}, \cite{BB24}), random walks on Hecke algebras (\cite{Buf20}, \cite{HS23}), binary search trees (\cite{ABC21}), stable matchings (\cite{AHHL21}), a toy model for random band matrices (\cite{GP18}), finitely dependent colorings of integers (\cite{HHL20}), q-exchangeability (\cite{GO10}, \cite{GO12}, \cite{BC24}), and statistical physics (\cite{Sta09}, \cite{SW18}). 

Various statistics of the Mallows measure and its asymptotics were studied. The behavior of the function $H_{L,K}(\omega)$ is related to a permuton-type limit, which was studied, in particular, in \cite{Sta09}, \cite{Muk16}, \cite{SW18}, \cite{KKRW20}, \cite{GK24}. In these works, the Law of Large Numbers was obtained for $H_{L,K}(\omega)$, as well as for a number of more general models.  The cycle structure of random Mallows permutations was studied in \cite{GP18}, \cite{HMV23}. The longest increasing subsequences were studied in \cite{MS13}, \cite{BP15}, \cite{BB17}. A stochastic process related with changing $q$ was studied in \cite{Cor22}, \cite{AK24}. 
 
\subsection{Approach}

We prove Theorem \ref{th:main-intro} and its generalizations via exact distribution formulas for $H_{L,K}(\omega)$, see Propositions \ref{prop:Height-Function}, \ref{prop:MalHeightInfManyPoints} below. These exact formulas are based on q-combinatorics behind the measure. The asymptotic analysis of the formulas involves various properties of the classical dilogarithm function, as well as somewhat delicate direct calculations.  

These exact distribution formulas for the Mallows measure do not seem to be extensively used for the asymptotic analysis in the existing literature. In particular, the works \cite{Sta09}, \cite{SW18}, as well as all previously cited works, relied on other methods. However, Proposition \ref{prop:Height-Function} was observed and proved in \cite{SW18}. It was also observed that ''The advantage of this second approach is that...one may prove local limit theorems'', see \cite[Appendix B]{SW18}. 
A part of computations of the present paper is probably alluded to in \cite{SW18}, \cite{Wal15}, where the q-Stirling formula is mentioned as a possible way towards the local limit theorems. However, we were unable to find further details of this approach in the existing literature. We also remark that the multidimensional exact formula (Proposition \ref{prop:MalHeightInfManyPoints}) seems to be a new result.

\subsection*{Acknowledgments}
Both authors were partially supported by the European Research Council (ERC), Grant Agreement No. 101041499.

\section{Preliminaries}
\label{sec:prelim}

\subsection{Mallows measure}
\label{ssec:MalDef}

We will use the notation $\Z_N := \{1,2, \dots, N \}$. Let $q$ be a real number, $0 \le q < 1$.

The Mallows measure $\mathcal{M}_N^{q}$ on the set of permutations $\Z_N \mapsto \Z_N$ assigns a permutation $w$ the probability $q^{\mathrm{inv}(w)} Y_N$, where $\mathrm{inv}(w)$ is the number of inversions in $w$ and $Y_N$ is a normalisation constant given by
$$
Y_N := \prod_{j=1}^N \frac{1-q}{1-q^j}.
$$
Another description of the measure $\mathcal{M}_N^{q}$ can be given with the use of the so called \textit{q-exchangeability}. Let $w(i) = a$, $w(i+1) = b$, and let $w_{i,i+1}$ coincides with $w$ for all integers with the exception that $w_{i,i+1} (i) = b$, $w_{i,i+1} (i+1) = a$. Then for any $i,a,b$ one has
$$
q \mathcal{M}_N^{q} (w) = \mathcal{M}_N^{q} (w_{i,i+1}) , \ \ \ \mbox{if $a<b$} ; \qquad \qquad  \mathcal{M}_N^{q} (w) = q \mathcal{M}_N^{q} (w_{i,i+1}) , \ \ \ \mbox{if $a>b$},
$$
and $\mathcal{M}_N^{q}$ is a unique probability measure on permutations that satisfies the property.

\subsection{The q-shuffle algorithm}
\label{ssec:q-shuffle}

The random permutation distributed according to the measure $\mathcal{M}_N^{q}$ can be efficiently sampled via the finite $q$-shuffle algorithm. Let $G_{N,q}$ be a truncated geometric distribution on $\Z_N$ given by
$$
G_{N,q} (i) = \frac{q^{i-1} (1-q)}{1-q^N}, \qquad i=1,2,\dots,N.
$$
Let $\xi_1, \xi_2, \dots, \xi_{N}$ be independent random variables, distributed according to $G_{N,q}$, $G_{N-1,q}$, ..., $G_{1,q}$, respectively.
The algorithm below also uses a (dynamically changing) word which consists of integers from $\Z_N$. Its initial form is $1 2 3 ... N$ (all integers are written in their linear order from left to right; each integer occurs exactly once), and this word will change during the algorithm.

At step 1 of the sampling algorithm we set $w(1)= \xi_1$. Then we remove the integer $w(1)$ from the word $1 2 3 4 \dots N$. At step 2, we set $w(2)$ to be equal to the integer which stands at position $\xi_2$ (counting from the left) at the current word (at step 2 it can be $\xi_2+1$ or $\xi_2$, depending on whether $\xi_1 \le \xi_2$ or not). Then we remove the chosen letter from the word, and iterate the procedure. At step number $k$, we set $w(k)$ to be equal to the integer which stands at position $\xi_k$ (counting from the left) at the current word, and then remove the chosen integer from the word. At the end, we obtain a random permutation $w(1) w(2) \dots w(N)$, and it is well-known (and easy to check) that it indeed follows the Mallows distribution.

\subsection{Dilogarithm function}
In this section, we recall the definition of the dilogarithm function and dilogarithm identities which will be useful for us. We refer to \cite{Kir95} for more details.

The Euler dilogarithm function $Li_{2}(z)$ is defined by a power series
\begin{align*}
Li_{2}(z) :=\sum_{n=1}^{\infty}\frac{z^{n}}{n^2}, \quad \text{for} \quad |z|<1.
\end{align*}
With the use of the integral representation
\begin{align}
\label{def:Li2}
Li_{2}(z)=-\int_{0}^{z}\frac{\ln(1-t)}{t}dt, \quad \text{for} \quad \mathbb{C} \setminus [1, +\infty),
\end{align}
function $Li_{2}(z)$ can be analytically continued as a  complex funtion to the complex plane cut along the real axis from 1 to $+\infty$. The dilogarithm function and the ordinary logarithm are related by
\begin{align}
\label{eq:Lilog}
\frac{d}{dz}Li_{2}(z)=-\frac{1}{z}\ln(1-z).
\end{align}
We will use the reflection property (L. Euler, 1768):
\begin{align}
\label{eq:inverse}
Li_2\left(\frac{1}{z}\right)=-Li_2(z)-\frac{\pi^2}{6}-\frac{1}{2}\ln^2(-z),
\end{align}
and the following nine-term functional equation (W. Mantel, 1898):
\begin{multline}
\label{eq:Mantel}
Li_2\left(\frac{ab}{uv}\right)=Li_2\left(\frac{a}{u}\right)+Li_2\left(\frac{b}{v}\right)+Li_2\left(\frac{a}{v}\right)+Li_2\left(\frac{b}{u}\right)+Li_2\left(u\right)+Li_2\left(v\right)\\
-Li_2\left(a\right)-Li_2\left(b\right)+\frac{1}{2}\ln^{2}\left(-\frac{u}{v}\right), \quad \text{for} \quad (1-a)(1-b)=(1-u)(1-v).
\end{multline}

\section{Exact formula for the height function of Mallows measure}

\subsection{Single-point height function}

Let $w$ be the random permutation distributed according to the Mallows measure $\mathcal{M}_{N}^{q}$, $N \in \mathbb{N}$. Let us define a \textit{height function} via
\begin{align}
\label{def:Height-Function}
H^{(N)}_{L,K}(\omega) := \#\{ i : 1 \le i \le L, 1 \le w(i) \le K \}, \ \text{where}\ 1 \le L \le N,\ 1 \le K \le N. 
\end{align}

The Mallows measure is invariant under the inversion map $\omega \to \omega^{-1}$. Therefore, the distribution of the height function \eqref{def:Height-Function} has the following symmetry property:
\begin{align}
\label{property:symmetry}
\mathcal{M}_N^{q}(H^{(N)}_{L,K}(\omega)=s)=\mathcal{M}_{N}^{q}(H^{(N)}_{L,K}(\omega^{-1})=s)=\mathcal{M}_N^{q}(H^{(N)}_{K,L}(\omega)=s),
\end{align}
where $s \in \mathbb{N}$ and $L+K-N \leq s \leq \min\{L,K\}$. 

Below we use a standard notation for $q$-Pochhammer symbols:
\begin{align}
\label{eq:qPoch}
(a ; q)_{\infty}=\prod_{k=0}^{\infty}\left(1-aq^{k}\right), \quad (a ; q)_{n}=\prod_{k=0}^{n-1}\left(1-aq^{k}\right)=\frac{(a ; q)_{\infty}}{(aq^{n} ; q)_{\infty}}.
\end{align}

The main goal of this section is to prove the following result. It was first proved (in a slightly different form) in \cite[Lemma 8.1]{SW18}.
\begin{proposition}
\label{prop:Height-Function}
Let $w$ be the random permutation distributed according to the Mallows measure $\mathcal{M}_{N}^{q}$, $N \in \mathbb{N}$. Fix $L,K$ such that $1 \le L \le N$, $1 \le K \le N$. For any $s \in \mathbb{N}$ such that $\max\{L+K-N, 0\} \leq s \leq \min\{L,K\}$, we have 
\begin{multline}
\label{eq:Height-Function}
\mathcal{M}_{N}^{q}\left( H^{(N)}_{L,K}(\omega)=s \right)\\
=q^{(K-s)(L-s)}\frac{(q^{s+1} ; q)_{\infty}(q^{N+1} ; q)_{\infty}(q^{K-s+1} ; q)_{\infty}(q^{L-s+1} ; q)_{\infty}(q^{N+s+1-K-L} ; q)_{\infty}}{(q^{K+1} ; q)_{\infty}(q^{L+1} ; q)_{\infty}(q^{N-K+1} ; q)_{\infty}(q^{N-L+1} ; q)_{\infty}(q ; q)_{\infty}}
\\=q^{(K-s)(L-s)}\frac{(q ; q)_{K}(q ; q)_{L}(q ; q)_{N-K}(q ; q)_{N-L}}{(q ; q)_{s}(q ; q)_{K-s}(q ; q)_{L-s}(q;q)_{N+s-K-L}(q ; q)_{N}}.\\
\end{multline}
\end{proposition}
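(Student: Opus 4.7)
The plan is to evaluate
$$
\mathcal{M}_N^q\bigl(H^{(N)}_{L,K}(\omega)=s\bigr)\;=\;Y_N\!\!\sum_{w:\,H^{(N)}_{L,K}(w)=s}\!\!q^{\mathrm{inv}(w)}
$$
by fibering permutations over the subset $S:=\{w(1),\ldots,w(L)\}\subset\Z_N$, which on the event in question satisfies $|S|=L$ and $|S\cap\Z_K|=s$. The key observation is that the inversion count $\mathrm{inv}(w)$ decomposes neatly relative to $S$.

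First I would decompose $\mathrm{inv}(w)$ into three pieces according to whether both positions of the pair lie in $[1,L]$, both lie in $[L+1,N]$, or the pair straddles both blocks. The first two pieces depend only on the restrictions of $w$ to each position-block, viewed as bijections onto $S$ and $\Z_N\setminus S$ respectively. The straddling contribution equals
$$
\mathrm{cross}(S)\;:=\;\#\{(a,b)\in S\times(\Z_N\setminus S)\,:\,a>b\}
$$
and depends only on $S$. Summing $q^{\mathrm{inv}}$ over permutations within each block yields the $q$-factorials $[L]_q!$ and $[N-L]_q!$, so combined with $Y_N=1/[N]_q!$ one obtains the clean marginal
$$
\mathcal{M}_N^q\bigl(\{w(1),\ldots,w(L)\}=S\bigr)=\frac{q^{\mathrm{cross}(S)}}{\binom{N}{L}_q}.
$$

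Next I would evaluate $\sum_S q^{\mathrm{cross}(S)}$ under the constraints $|S|=L$, $|S\cap\Z_K|=s$. Writing $S=S_1\sqcup S_2$ with $S_1\subset\Z_K$ of size $s$ and $S_2\subset\{K+1,\dots,N\}$ of size $L-s$, a direct count gives $\mathrm{cross}(S)=\sum_{a\in S_1}a+\sum_{a\in S_2}a-\binom{L+1}{2}$. Shifting to $S_2':=\{a-K:a\in S_2\}\subset\Z_{N-K}$ factorizes the sum, and applying the standard $q$-binomial identity
$$
\sum_{T\subset\Z_m,\,|T|=n}q^{\,\sum_{a\in T}a-\binom{n+1}{2}}\;=\;\binom{m}{n}_{\!q}
$$
to each factor produces $\binom{K}{s}_q\binom{N-K}{L-s}_q$, while the shift contributes an overall $q^{K(L-s)}$.

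Combining everything gives
$$
\mathcal{M}_N^q\bigl(H^{(N)}_{L,K}(\omega)=s\bigr)\;=\;\frac{q^{(K-s)(L-s)}\binom{K}{s}_q\binom{N-K}{L-s}_q}{\binom{N}{L}_q},
$$
which matches \eqref{eq:Height-Function} after expanding the $q$-binomial coefficients into $q$-Pochhammer symbols. The main piece of bookkeeping is verifying the elementary identity $\binom{L+1}{2}-\binom{s+1}{2}-\binom{L-s+1}{2}=s(L-s)$, which combines with the shift constant $K(L-s)$ to produce the clean prefactor $q^{(K-s)(L-s)}$; everything else is a routine application of $q$-combinatorics.
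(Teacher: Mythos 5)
Your argument is correct, and it takes a genuinely different route from the paper. The paper runs the finite $q$-shuffle sampling algorithm for $L$ steps, records the letters as a word in $\{\alpha,\beta\}^L$, computes the probability of the single word $\alpha^s\beta^{L-s}$ by telescoping the step-by-step transition probabilities, and then sums over the orbit using $q$-exchangeability together with MacMahon's formula $\sum_{\{\nu\}} q^{\mathrm{inv}(\nu)} = (q;q)_L/\bigl((q;q)_s (q;q)_{L-s}\bigr)$. You instead work directly with the definition $\mathcal{M}_N^q(w) = Y_N q^{\mathrm{inv}(w)}$: you split $\mathrm{inv}(w)$ according to the position blocks $[1,L]$ and $[L+1,N]$, obtain the clean marginal $\mathcal{M}_N^q(\{w(1),\dots,w(L)\}=S) = q^{\mathrm{cross}(S)}/\binom{N}{L}_q$, and then sum $q^{\mathrm{cross}(S)}$ over the admissible sets $S$ via the standard identity expressing $\binom{m}{n}_q$ as the generating function of $\sum_{a\in T}a - \binom{n+1}{2}$ over $n$-subsets $T\subset\Z_m$. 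I checked the bookkeeping: $\mathrm{cross}(S)=\sum_{a\in S}a-\binom{L+1}{2}$, the shift constant $K(L-s)$, and the identity $\binom{L+1}{2}-\binom{s+1}{2}-\binom{L-s+1}{2}=s(L-s)$ all combine to give exactly $q^{(K-s)(L-s)}\binom{K}{s}_q\binom{N-K}{L-s}_q/\binom{N}{L}_q$, which agrees with \eqref{eq:Height-Function} upon expanding the $q$-binomials. Your approach is more elementary and self-contained (no sampling algorithm, no appeal to $q$-exchangeability), and the intermediate marginal law of the prefix set $S$ is of independent interest. The paper's route via the $q$-shuffle algorithm has the advantage that the same machinery extends with no extra work to the multi-point formula of Proposition \ref{prop:MalHeightInfManyPoints}, since the independence of the geometric variables $\xi_i$ across blocks of positions is what drives that factorization; reproducing the multi-point statement with your set-based decomposition would require additional effort.
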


\begin{proof}
For the proof we need to analyze the finite $q$-shuffle algorithm from Section \ref{ssec:q-shuffle}. We are interested in the prefix $\omega(1)\omega(2)\cdots\omega(L)$ and we need to distinguish whether each letter in this word is less or equal to $K$ or greater than $K$. We will denote by $\alpha$ the integers from 1 to $K$, and by $\beta$ the integers from $K+1$ to $N$. In these notations, the starting word involved in the finite $q$-shuffle algorithm is $\alpha\alpha\cdots\alpha\beta\beta\cdots\beta$ with $K$ letters $\alpha$ and $N-K$ letters $\beta$. We need to run $L$ steps of the sampling algorithm and keep track of how the words evolve.

In more detail, we regard $\omega(1)\omega(2)\cdots\omega(L)$ as a random word in $\{\alpha, \beta\}^{L}$. We obtain this word via the following procedure: the first letter $\omega(1)$ is generated to be $\alpha$ with probability $\frac{1-q^{K}}{1-q^{N}}$, and is generated to be $\beta$ with probability $\frac{q^{K}-q^{N}}{1-q^{N}}$; for $i=2,\cdots,L$, we generate $\omega(i)$ to be $\alpha$ with probability $\frac{1-q^{K_i}}{1-q^{N_i}}$, and generate $\omega(i)$ to be $\beta$ with probability $\frac{q^{K_i}-q^{N_i}}{1-q^{N_i}}$, where $N_i=N_{i-1}-1$ and 
\begin{align*}
K_i=
 \left\{ \begin{array}{rcl}
K_{i-1}-1, & \mbox{for} & \omega(i-1)=\alpha, \\
K_{i-1}, & \mbox{for} & \omega(i-1)=\beta.
\end{array}\right.
\end{align*}
In the above procedure, we use the notation $K_1=K, N_1=N$. It is readily verified that the above procedure corresponds to the $q$-shuffle algorithm. In particular, $K_i$ tracks the number of remaining integers which are $\leq K$ in the word at each step of the sampling, and $N_i$ tracks the total length of the word. This procedure assigns to each element of $\{\alpha, \beta\}^{L}$ a probability, and we need to sum these probabilities over all words with exactly $s$ letters $\alpha$ in order to get the claim of Proposition \ref{prop:Height-Function}. Thanks to the $q$-exchangeability property of the Mallows measure, we only need to calculate the generated probability of the word $\nu(\lambda):=\underbrace{\alpha\alpha\cdots\alpha}_{s}\underbrace{\beta\beta\cdots\beta}_{L-s}$, where we denote $\lambda:=(s, L-s)$. 

According to the above procedure, the generated probability of the word $\nu(\lambda)$ is equal to 
\begin{align}
\label{eq:nu-lambda}
\prod_{i=0}^{s-1}\frac{1-q^{K-i}}{1-q^{N-i}}\prod_{j=s}^{L-1}\frac{q^{K-s}-q^{N-j}}{1-q^{N-j}}=q^{(K-s)(L-s)} \frac{(q;q)_{K}(q;q)_{N-K}(q;q)_{N-L}}{(q;q)_{K-s}(q;q)_{N+s-K-L}(q;q)_{N}}.
\end{align}
Next, we need to calculate the summation $\sum_{\{\nu\}} q^{\text{inv}(\nu)}$, where $\{\nu\}$ stands for the set of all words $\nu$ belonging to the orbit of the word $\nu(\lambda)$ under the action of a permutation group. We use the MacMahon's formula for the generating function of the number of inversions in permutations of a multiset (see \cite{And98}): 
\begin{align}
\label{eq:inv}
\sum_{\{\nu\}} q^{\text{inv}(\nu)}=\frac{(q;q)_{L}}{(q;q)_{s}(q;q)_{L-s}}.
\end{align}
Combining \eqref{eq:nu-lambda} and \eqref{eq:inv}, we get the last formula of \eqref{eq:Height-Function}. The middle one is obtained from it via a simple transformation. 
\end{proof}

%

\subsection{Multi-point height function}

Let $w$ be the random permutation distributed according to the Mallows measure $\mathcal{M}_{N}^{q}$, $N \in \mathbb{N}$. Let us define a more general height function via
$$
H^{(N)}_{[L,\hat L]; K}(\omega) := \#\{ i : L \le i \le \hat L, 1 \le w(i) \le K \}, \ \  \text{where}\ 1 \le L \le \hat L \le N,\ 1 \le K \le N, 
$$
and let us define a \textit{multi-point height function} as a vector of random variables of the following form
\begin{equation}
\label{deff:MalHeightInfManyPoints}
H^{(N)}_{L_1, L_2, \dots, L_r; K} (w ) \\ := \left( H^{(N)}_{L_1,K}(\omega), H^{(N)}_{[L_1+1,L_2],K}(\omega), \dots, H^{(N)}_{[L_{r-1}+1,L_r],K}(\omega) \right), \ \ r \in \N.
\end{equation}
The main goal of this section is to prove the following result.

\begin{proposition}
\label{prop:MalHeightInfManyPoints}
Let $w$ be the random permutation distributed according to the Mallows measure $\mathcal{M}_{N}^{q}$, $N \in \mathbb{N}$.
For any $r \in \N$, $1 \le K \le N$, $1 \le L_1 \le \dots \le L_i \le \dots L_r \le N$, and any $(s_1, s_2, \dots, s_r) \in \{0,1,2, \dots, \}^r$, such that $\max\{L_i-L_{i-1}+K-N, 0\} \leq s_i \leq \min\{L_i-L_{i-1},K\}$, we have
\begin{multline}
\label{eq:MalHeightInfManyPoints}
\mathcal{M}_{N}^{q} \left( H^{(N)}_{L_1, L_2, \dots, L_r; K} (w ) = (s_1,s_2, \dots, s_r) \right) \\
= \mathcal{M}_{N}^{q} \left( H^{(N)}_{L_1,K } = s_1  \right) \mathcal{M}_{N}^{q} \left( H^{(N-L_1)}_{L_{2} -L_1, K-s_1} = s_2 \right) \cdots \mathcal{M}_{N}^{q} \left( H^{(N-L_{r-1})}_{L_{r} - L_{r-1} , K-s_1-s_2-\dots-s_{r-1}} = s_r  \right).
\end{multline}
\end{proposition}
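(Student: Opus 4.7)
The plan is to use the $q$-shuffle description from Section~\ref{ssec:q-shuffle} together with induction on $r$.

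The base case $r=1$ is tautological. For the inductive step, the key observation is a ``Markov-type'' property of the $q$-shuffle algorithm. After the first $L_1$ steps of the algorithm we have produced the prefix $w(1), w(2), \dots, w(L_1)$, removed these letters from the running word, and are left with a word of length $N-L_1$ consisting of the unused integers in their natural linear order. Conditioning on $s_1 = H^{(N)}_{L_1,K}(\omega)$, exactly $K-s_1$ of the remaining integers are $\le K$ and the remaining $N - L_1 - (K - s_1)$ integers are $>K$. Now the subsequent random inputs $\xi_{L_1+1}, \xi_{L_1+2}, \dots, \xi_{N}$ are, by construction, independent of $\xi_1,\dots,\xi_{L_1}$ and distributed according to $G_{N-L_1,q}, G_{N-L_1-1,q}, \dots, G_{1,q}$. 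Therefore, if we re-index the remaining $N-L_1$ integers as $\{1, 2, \dots, N-L_1\}$ in the order-preserving way, the subsequent letters $w(L_1+1), \dots, w(N)$ of the re-indexed word are distributed according to $\mathcal M^{q}_{N-L_1}$.

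Since the re-indexing is order-preserving, the integers $\le K$ in the original alphabet correspond exactly to the integers $\le K-s_1$ in the re-indexed alphabet. Hence the count
$$H^{(N)}_{[L_1+1,L_j],K}(\omega) = \#\{L_1 < i \le L_j : w(i) \le K\}$$
in the original sampling equals the count $\#\{1 \le i \le L_j - L_1 : \tilde w(i) \le K-s_1\}$ for the re-indexed permutation $\tilde w$ of size $N - L_1$. Consequently, conditionally on $s_1$, the tail of the multi-point height function
$$\left(H^{(N)}_{[L_1+1,L_2],K}, H^{(N)}_{[L_2+1,L_3],K}, \dots, H^{(N)}_{[L_{r-1}+1,L_r],K}\right)$$
has the same distribution as
$$H^{(N-L_1)}_{L_2 - L_1,\, L_3 - L_1,\, \dots,\, L_r - L_1;\, K-s_1}(\tilde w)$$
under $\mathcal M^{q}_{N-L_1}$.

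Multiplying by $\mathcal M^{q}_N(H^{(N)}_{L_1,K} = s_1)$ and applying the inductive hypothesis to the size-$(r-1)$ multi-point height function on the smaller Mallows permutation yields the desired factorization. The only real point of care is the ``Markov'' step described in the first paragraph, i.e.\ identifying the conditional law of the tail of the $q$-shuffle as a fresh Mallows sample on the reduced alphabet; but this is essentially transparent from the $q$-shuffle construction, since the inputs $\xi_i$ are independent and the distribution $G_{n,q}$ used at step $i$ depends only on the current length $n = N - i + 1$ of the running word. No delicate combinatorial identities are required beyond Proposition~\ref{prop:Height-Function}.
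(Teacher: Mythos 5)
Your proof is correct and follows essentially the same route as the paper: both arguments run the $q$-shuffle algorithm, observe that after the first $L_1$ steps the remaining $\xi$'s are independent and distributed as a fresh $q$-shuffle on the reduced word of length $N-L_1$ containing $K-s_1$ letters $\le K$, and iterate. Your write-up merely makes explicit the order-preserving re-indexing and the fact that the conditional law of the tail depends on the prefix only through $s_1$, which the paper leaves implicit.
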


\begin{remark}
The right-hand side of \eqref{eq:MalHeightInfManyPoints} is an explicit product formula due to \eqref{eq:Height-Function}.
\end{remark}

\begin{proof}

For the proof we need to use the sampling algorithm from Section \ref{ssec:q-shuffle} again. We start with the prefix of the permutation $w(1) w(2) \dots w(L_1)$. The chance that exactly $s_1$ of these letters are $\le K$ equals $ \mathcal{M}_{N}^{q} \left( H^{(N)}_{L_1,K } = s_1  \right)$ immediately from the definition of the algorithm. Therefore, $(K-s_1)$ slots remain free when we start sampling the places of integers from $[L_1+1;L_2]$. Since the random variables which are used in this sampling are independent from the ones used in the first step, we obtain that the chance that exactly $s_2$ integers from  
$[L_1+1;L_2]$ occupy these spots is equal to $\mathcal{M}_{N}^{q} \left( H^{(N-L_1)}_{L_{2} -L_1, K-s_1} = s_2 \right)$. Iterating this argument, we arrive at the statement of the proposition.

\end{proof}

In the remainder of the text, we will often omit the upper index $(N)$ in the notation $H^{(N)}_{K,L}$ for brevity.

\section{Asymptotics of height functions}

\subsection{Asymptotics of $q$-Pochhammer functions}
In this section, we consider the asymptotics of $q$-Pochhammer functions under the following scaling:
\begin{align}
\label{eq:scale}
\beta \in \R_{>0}, \quad q=1-\frac{\beta}{N}, \quad N \to \infty.
\end{align}

The following asymptotics is well-known (see \cite{HR18}). 
\begin{proposition}
Under the scaling \eqref{eq:scale}, we have
\begin{align}
\label{eq:lnqq}
\ln (q ; q)_{\infty}=-\frac{\pi^2}{6\beta}N+\frac{\pi^2}{12}-\frac12\ln\frac{\beta}{2\pi N}+O(N^{-1}).
\end{align}
\end{proposition}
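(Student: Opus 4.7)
The plan is to reduce the claim to the classical asymptotic expansion of $\ln\bigl(e^{-\epsilon};e^{-\epsilon}\bigr)_{\infty}$ as $\epsilon \to 0^{+}$, which is essentially a consequence of the modular transformation of the Dedekind eta function (this is the Hardy--Ramanujan result cited as \cite{HR18}), and then to perform a short Taylor expansion in $1/N$.

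First I would set $\epsilon := -\ln q > 0$ and rewrite $(q;q)_{\infty} = \prod_{n \ge 1}(1-e^{-n\epsilon})$. The classical one-variable expansion
\begin{equation*}
\ln(q;q)_{\infty} = -\frac{\pi^{2}}{6\epsilon} + \frac{1}{2}\ln\frac{2\pi}{\epsilon} + \frac{\epsilon}{24} + O\bigl(e^{-c/\epsilon}\bigr)
\end{equation*}
can be derived from the modular identity $\eta(-1/\tau) = \sqrt{-i\tau}\,\eta(\tau)$ applied with $\tau = i\epsilon/(2\pi)$ to $\eta(\tau)=e^{\pi i\tau/12}\prod_{n\ge 1}(1-e^{2\pi i n\tau})$. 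Under this substitution the image $-1/\tau = 2\pi i/\epsilon$ has large imaginary part, so $\eta(-1/\tau)$ equals $e^{-\pi^{2}/(6\epsilon)}$ up to the exponentially small factor $\prod_{n\ge 1}(1-e^{-4\pi^{2}n/\epsilon})$; solving for $(q;q)_{\infty} = e^{\epsilon/24}\eta(\tau)$ yields the displayed formula.

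Second I would substitute $\epsilon = -\ln(1-\beta/N) = \beta/N + \beta^{2}/(2N^{2}) + O(N^{-3})$, which gives $1/\epsilon = N/\beta - 1/2 + O(N^{-1})$. The three terms of the $\epsilon$-expansion then contribute $-\pi^{2}N/(6\beta) + \pi^{2}/12 + O(N^{-1})$, $-\tfrac{1}{2}\ln(\beta/(2\pi N)) + O(N^{-1})$, and $O(N^{-1})$, respectively. Summing yields the claimed identity.

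The only non-trivial input is the expansion of $\ln(q;q)_{\infty}$ in $\epsilon$; this is classical and is already cited in the paper. Everything else is an elementary Taylor expansion, with the mildly delicate point being that one must expand $\epsilon$ (and $1/\epsilon$) to two orders in $1/N$ in order to extract the correct constant term $\pi^{2}/12$ from the leading singular contribution $-\pi^{2}/(6\epsilon)$; the logarithmic term likewise needs to be expanded to first order to see that the discrepancy between $-\tfrac{1}{2}\ln(\epsilon/(2\pi))$ and $-\tfrac{1}{2}\ln(\beta/(2\pi N))$ is indeed $O(N^{-1})$.
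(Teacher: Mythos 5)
Your proposal is correct and takes essentially the route the paper intends: the paper states this proposition without proof, citing the classical Hardy--Ramanujan asymptotics, and your derivation via the modular transformation $\eta(-1/\tau)=\sqrt{-i\tau}\,\eta(\tau)$ is precisely the standard source of that expansion. Your Taylor step is also handled correctly — in particular, expanding $1/\epsilon = N/\beta - 1/2 + O(N^{-1})$ to second order is exactly what produces the constant $\pi^{2}/12$.
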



The main result of this section is the following proposition. Similar asymptotics of $q$-Pochhammer functions were performed in various works, including, e.g., \cite{Moa84}, \cite{BC14}. We were unable to locate precisely this statement.
\begin{proposition}
Let $\alpha = \alpha(N)$ be a sequence such that $\alpha = o \left( N^{1/6} \right)$, let $\delta \in \R_{>0}$, and let $s=\lfloor \delta N+\alpha \sqrt{N} \rfloor$. 
Under the scaling \eqref{eq:scale}, we have the following asymptotics:
\begin{multline}
\label{eq:lnqsqrtNq}
 \ln (q^{s+1} ; q)_{\infty}=-\frac{Li_{2}(e^{-\beta\delta})}{\beta} \cdot N -\alpha\ln(1- e^{-\delta \beta})\cdot \sqrt{N}-\alpha^2 \cdot \frac{\beta e^{-\delta\beta}}{2(1-e^{-\delta\beta})}\\
+\frac12Li_{2}(e^{-\beta\delta})-\frac{1+\delta \beta}{2}\ln(1- e^{-\delta \beta})+O\left(\frac{\alpha^3}{\sqrt{N}}\right)+O\left(\frac{\alpha^4}{N}\right).
\end{multline}
\end{proposition}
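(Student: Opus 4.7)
The plan is to take the logarithm of the infinite product, convert the resulting sum to an integral by Euler--Maclaurin, evaluate the integral in closed form via the dilogarithm, and then carefully expand the answer in powers of $N^{-1/2}$.

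First, I write
\[
\ln(q^{s+1};q)_{\infty} = \sum_{k=0}^{\infty} f(k), \qquad f(k) := \ln(1-q^{s+1+k}).
\]
Since every differentiation of $f$ produces a factor $\ln q = O(N^{-1})$, one has $f^{(j)}(0) = O(N^{-j+1})$, and the Euler--Maclaurin formula applied to the sum gives
\[
\sum_{k=0}^{\infty} f(k) \;=\; \int_{0}^{\infty} f(x)\,dx \;+\; \tfrac{1}{2}f(0) \;+\; O(N^{-1}).
\]
Substituting $u = q^{s+1+x}$ in the integral (so that $du = u \ln q\,dx$) and invoking the definition \eqref{def:Li2} of the dilogarithm yields the clean closed form
\[
\int_{0}^{\infty} \ln(1-q^{s+1+x})\,dx \;=\; \frac{Li_{2}(q^{s+1})}{\ln q}.
\]

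Next I expand each factor. From $\ln q = -\beta/N - \beta^{2}/(2N^{2}) + O(N^{-3})$ I get $1/\ln q = -N/\beta + 1/2 + O(N^{-1})$. Setting $w := (s+1)\ln q$ and using $s = \delta N + \alpha\sqrt{N} + O(1)$, I obtain $w = -\beta\delta - \alpha\beta\,N^{-1/2} - \tfrac{1}{2}\delta\beta^{2}\,N^{-1} + O(N^{-1})$. The key derivatives are
\[
\tfrac{d}{dw}Li_{2}(e^{w}) = -\ln(1-e^{w}), \qquad \tfrac{d^{2}}{dw^{2}}Li_{2}(e^{w}) = \frac{e^{w}}{1-e^{w}},
\]
so a third-order Taylor expansion of $Li_{2}(e^{w})$ around $w_{0} = -\beta\delta$ converts $Li_{2}(q^{s+1})$ into a polynomial in $(w-w_{0})$ with an $O((w-w_{0})^{3})$ remainder. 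Multiplying by $1/\ln q$ and adding the Euler--Maclaurin boundary term $\tfrac{1}{2}f(0) = \tfrac{1}{2}\ln(1-e^{-\beta\delta}) + o(1)$ then produces the five terms on the right-hand side of \eqref{eq:lnqsqrtNq}: the leading $-Li_{2}(e^{-\beta\delta})N/\beta$, the $\sqrt{N}$-correction $-\alpha\ln(1-e^{-\beta\delta})\sqrt{N}$ from the linear Taylor coefficient times $-N/\beta$, the quadratic-in-$\alpha$ contribution from the second Taylor coefficient times $-N/\beta$, and the two explicit constants.

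The main obstacle will be the bookkeeping of the constant-order term, which receives contributions from three independent sources: (i) the $N^{-1}$ piece of $w - w_{0}$ acting through the linear Taylor coefficient and the $-N/\beta$ prefactor, (ii) the $\tfrac{1}{2}$ piece of $1/\ln q$ acting on $Li_{2}(e^{-\beta\delta})$, and (iii) the Euler--Maclaurin boundary term $\tfrac{1}{2}f(0)$. One must verify that their algebraic sum reproduces exactly $\tfrac{1}{2}Li_{2}(e^{-\beta\delta}) - \tfrac{1+\delta\beta}{2}\ln(1-e^{-\beta\delta})$, and that the Taylor remainder combined with the higher Euler--Maclaurin terms is bounded by $O(\alpha^{3}/\sqrt{N}) + O(\alpha^{4}/N)$ uniformly in the allowed range $\alpha = o(N^{1/6})$ (this is precisely where the growth condition on $\alpha$ is used: it forces each higher-order Taylor term in $(w-w_{0})^{j}$, which is of size $\alpha^{j}N^{-j/2}$, to be small).
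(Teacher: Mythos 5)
Your proposal is correct and follows essentially the same route as the paper: Euler--Maclaurin with the boundary term $\tfrac12 f(0)$ and an $O(N^{-1})$ remainder, a dilogarithm evaluation of the integral, and a Taylor expansion around $-\beta\delta$ (the paper expands the integrand $\ln(1-e^{-z})$ at $z=\delta\beta$ and integrates term by term, which is the same computation as your expansion of $Li_{2}(e^{w})$ in $w-w_0$). One bookkeeping caution: in your displayed expansion of $w=(s+1)\ln q$ the piece $-\beta/N$ coming from the shift $s\mapsto s+1$ is buried inside the $O(N^{-1})$, but it cannot be discarded there --- after passing through the linear Taylor coefficient and the $-N/\beta$ prefactor it contributes the constant $-\ln(1-e^{-\beta\delta})$, which together with your sources (i)--(iii) is exactly what produces $-\tfrac{1+\delta\beta}{2}\ln(1-e^{-\beta\delta})$; once that term is tracked explicitly, the verification you defer does close as claimed.
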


\begin{proof}
Let $f(x)$ be a continuously differentiable function $[a,b] \to \R$, where $a,b \in \N$. A simple version of the Euler-Maclaurin formula reads
$$
\sum_{n=a}^{b}f(n)=\int_{a}^{b}f(x)dx+\frac{f(a)+f(b)}{2}+ \int_a^b f'(x) \left( x - \lfloor x \rfloor - \frac12 \right)dx.
$$
Applying it to the logarithm of the $q$-Pochhammer symbol, we have:
\begin{multline*}
 \ln (q^{s+1} ; q)_{\infty} = \sum_{n=0}^{\infty} \ln (1- e^{(n+\delta N+\alpha\sqrt{N}+1)\ln(1-\frac{\beta}{N})} )\\
=\int_{0}^{\infty}\ln(1- e^{(t+\delta N+\alpha\sqrt{N}+1)\ln(1-\frac{\beta}{N})})dt + \frac{1}{2}\ln(1- e^{(\delta N+\alpha\sqrt{N}+1)\ln(1-\frac{\beta}{N})})+O(N^{-1})\\
=\frac{1}{-\ln(1-\frac{\beta}{N})}\int_{0}^{\infty}\ln(1- e^{(\delta N+\alpha\sqrt{N}+1)\ln(1-\frac{\beta}{N})-r})dr + \frac{1}{2}\ln(1- e^{-\delta\beta})+O(N^{-1})\\
=\frac{1}{-\ln(1-\frac{\beta}{N})}\int_{-(\delta N+\alpha\sqrt{N}+1)\ln(1-\frac{\beta}{N})}^{\infty}\ln(1- e^{-z})dz + \frac{1}{2}\ln(1- e^{-\delta\beta})+O(N^{-1})\\
=\frac{\int_{\delta \beta}^{\infty}\ln(1- e^{-z})dz-\int_{\delta \beta}^{-(\delta N+\alpha\sqrt{N}+1)\ln(1-\frac{\beta}{N})}\ln(1- e^{-z})dz}{-\ln(1-\frac{\beta}{N})} + \frac{1}{2}\ln(1- e^{-\delta\beta})+O(N^{-1})
\end{multline*}
Using the Taylor expansion of $g(z)=\ln(1- e^{-z})$ at $z=\delta\beta$, we have the following asymptotics:
\begin{multline*}
\int_{\delta \beta}^{-(\delta N+\alpha\sqrt{N}+1)\ln(1-\frac{\beta}{N})} \ln(1- e^{-z}) dz=\sum_{k=0}^{\infty}\frac{g^{(k)}(\delta\beta)}{k!}\int_{\delta \beta}^{-(\delta N+\alpha\sqrt{N}+1)\ln(1-\frac{\beta}{N})}(z-\delta\beta)^k dz\\
=\left[\alpha \cdot \frac{\beta}{\sqrt{N}}+\left(1+\frac{\delta\beta}{2}\right) \cdot \frac{\beta}{N} +\frac{\alpha\beta}{2\sqrt{N}}\frac{\beta}{N}\right] \cdot \ln(1- e^{-\delta\beta})+\alpha^2 \cdot \frac{\beta e^{-\delta\beta}}{2(1-e^{-\delta\beta})} \cdot \frac{\beta}{N}\\
+\left(1+\frac{\delta\beta}{2}\right) \frac{\beta e^{-\delta\beta}}{1-e^{-\delta\beta}} \frac{\alpha\beta}{N\sqrt{N}}-\frac{e^{-z}}{6(1- e^{-z})^2}\frac{\alpha^3\beta^3}{N\sqrt{N}}+O\left(\frac{\alpha^4}{N^{2}}\right),
\end{multline*}
where we used the expansion $-\ln(1-\frac{\beta}{N})=\sum_{k=1}^{\infty}\frac{\beta^k}{kN^k}$. Then we have 
\begin{multline*}
 \ln (q^{s+1} ; q)_{\infty}
 =\left( \beta^{-1}N-\frac12 \right) \int_{\delta \beta}^{\infty} \ln(1- e^{-z})dz+ \frac{1}{2}\ln(1- e^{-\delta\beta})+O(N^{-1})- \left( \beta^{-1}N-\frac12 \right) O\left(\frac{\alpha^4}{N^{2}}\right)\\
-\left( \beta^{-1}N-\frac12 \right) \left[\alpha \cdot \frac{\beta}{\sqrt{N}}+\left(1+\frac{\delta\beta}{2}\right) \cdot \frac{\beta}{N} +\frac{\alpha\beta}{2\sqrt{N}}\frac{\beta}{N}\right] \cdot \ln(1- e^{-\delta\beta})+(\beta^{-1}N-\frac12)\frac{e^{-z}}{6(1- e^{-z})^2}\frac{\alpha^3\beta^3}{N\sqrt{N}}\\
-\alpha^2 \left( \beta^{-1}N-\frac12 \right) \cdot \frac{\beta e^{-\delta\beta}}{2(1-e^{-\delta\beta})} \cdot \frac{\beta}{N} - \left(\beta^{-1}N-\frac12 \right) \left(1+\frac{\delta\beta}{2}\right) \frac{\beta e^{-\delta\beta}}{1-e^{-\delta\beta}} \frac{\alpha\beta}{N\sqrt{N}}\\
=\beta^{-1}N\int_{\delta \beta}^{\infty}\ln(1- e^{-z})dz-\alpha\ln(1- e^{-\delta \beta})\sqrt{N}-\alpha^2 \cdot \frac{\beta e^{-\delta\beta}}{2(1-e^{-\delta\beta})}\\
-\frac12\int_{\delta \beta}^{\infty}\ln(1- e^{-z})dz-\frac{1+\delta \beta}{2}\ln(1- e^{-\delta \beta})+O\left(\frac{\alpha^3}{\sqrt{N}}\right)+O\left(\frac{\alpha}{\sqrt{N}}\right)+O\left(\frac{\alpha^4}{N}\right),
\end{multline*}
where we use the fact $-1/\ln(1-\frac{\beta}{N})=\beta^{-1}N-\frac12+O(N^{-1})$. Using also \eqref{def:Li2}, we arrive at \eqref{eq:lnqNq}.
\end{proof}

\begin{corollary}
Under the scaling \eqref{eq:scale}, we have:
\begin{align}
\label{eq:lnqNq}
 \ln (q^{\delta N+1} ; q)_{\infty}=-\frac{Li_{2}(e^{-\beta\delta})}{\beta} \cdot N+\frac12Li_{2}(e^{-\beta\delta})-\frac{1+\beta \delta }{2}\ln(1- e^{-\beta \delta })+O(N^{-1}).
\end{align}
\end{corollary}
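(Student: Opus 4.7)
The Corollary is obtained by specializing the preceding Proposition to $\alpha\equiv 0$. Under this specialization, the three $\alpha$-dependent contributions on the right-hand side of \eqref{eq:lnqsqrtNq}, namely the terms $-\alpha\ln(1-e^{-\beta\delta})\sqrt{N}$ and $-\alpha^{2}\beta e^{-\beta\delta}/[2(1-e^{-\beta\delta})]$ together with the errors $O(\alpha^{3}/\sqrt{N})$ and $O(\alpha^{4}/N)$, all vanish, and what remains is exactly the right-hand side of the Corollary with residual error $O(N^{-1})$.

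The only subtle point is the exponent on the left-hand side: in the Proposition the exponent is the integer $\lfloor\delta N\rfloor+1$, whereas in the Corollary it is the (generally non-integer) real number $\delta N+1$. The simplest way I would handle this is to observe that the proof of the Proposition nowhere uses the integrality of $s$: all computations treat $s$ as a real parameter inside the summand $\ln(1-q^{s+1+n})$ while only the summation variable $n$ must be an integer for the Euler--Maclaurin identity to apply. Consequently the same proof, with the substitution $s\mapsto\delta N$ and $\alpha=0$, yields the Corollary directly.

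If instead one insists on deriving the Corollary from the Proposition as already stated, the same answer comes out via the exact cancellation of two $O(1)$ fractional-part contributions. On one side, the Taylor expansion of $-\frac{N}{\beta}Li_{2}(e^{-\beta\delta'})$ around $\delta$, with $\delta':=\lfloor\delta N\rfloor/N=\delta-\{\delta N\}/N$, contributes $+\{\delta N\}\ln(1-e^{-\beta\delta})$ via the identity $Li_{2}'(z)=-\ln(1-z)/z$ of \eqref{eq:Lilog}, while the remaining constant-order terms in $\delta'$ coincide with their $\delta$-values up to $O(N^{-1})$. On the other side, the direct comparison $\ln(q^{\delta N+1};q)_{\infty}-\ln(q^{\lfloor\delta N\rfloor+1};q)_{\infty}$, obtained by factoring each $1-q^{\delta N+1+k}=1-q^{\{\delta N\}}q^{\lfloor\delta N\rfloor+1+k}$ and expanding $q^{\{\delta N\}}=1-\{\delta N\}\beta/N+O(N^{-2})$, produces the series $\frac{\{\delta N\}\beta}{N}\sum_{k\ge 0}q^{\lfloor\delta N\rfloor+1+k}/(1-q^{\lfloor\delta N\rfloor+1+k})$, asymptotic to $-\{\delta N\}\ln(1-e^{-\beta\delta})$ by the same integral approximation that underlies the Proposition's proof. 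These two $O(1)$ contributions cancel exactly, and the main technical point to verify is precisely this cancellation: since the derivative of $\ln(q^{a+1};q)_{\infty}$ in $a$ is of order $\ln(1-e^{-\beta\delta})$ rather than $O(N^{-1})$, the floor-versus-real discrepancy is genuinely not negligible in isolation.
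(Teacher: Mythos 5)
Your proposal is correct and matches the paper's (implicit) derivation: the Corollary is simply the Proposition specialized to $\alpha=0$, with the $O(N^{-1})$ remainder coming from the Euler--Maclaurin error already present in that proof. Your careful treatment of the floor in the exponent is sound but unnecessary here, since the paper's convention (stated in the footnote to the proof of Theorem \ref{thm:LDP}) is that $\delta N$ denotes $\lfloor \delta N\rfloor$ throughout, and, as you correctly observe, the Proposition's proof never uses integrality of $s$ anyway.
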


\subsection{Large deviation principle}

In this section we briefly sketch the proofs of the large deviation principle and the law of large numbers for the height function \eqref{def:Height-Function}, first obtained in \cite{SW18}, \cite{Muk16}, \cite{Sta09}. 

\begin{theorem}
\label{thm:LDP}
Fix $\beta \in \mathbb{R}_{>0}$, and let $\omega$ be the random permutation distributed according to the Mallows measure $\mathcal{M}_{N}^{q}$, $q=1-\beta/N$, $N \in \mathbb{N}$. Fix $x,y\in(0,1)$, and define the function $h_{\beta}(x, y)$ via:
\begin{align}
\label{eq:mean}
h_{\beta}(x, y)=\frac{1}{\beta}\left[\ln(1-e^{-\beta})-\ln(e^{-\beta x}+e^{-\beta y}-e^{-\beta(x+y)}-e^{-\beta})\right].
\end{align}
Then for any $\delta \in (h_{\beta}(x, y), \min\{x,y\}]$, the following convergence holds:
\begin{align}
\label{eq:LDP1}
\lim_{N\to\infty}\frac{1}{N} \ln \mathcal{M}_{N}^{q} \left( H_{x N, y N}(\omega)\geq\delta N \right)=-a_{\beta}(x,y; \delta),
\end{align}
and for any $\delta \in [\max\{x+y-1,0\}, h_{\beta}(x, y))$, we have 
\begin{align}
\label{eq:LDP2}
\lim_{N\to\infty}\frac{1}{N} \ln \mathcal{M}_{N}^{q} \left( H_{x N, y N}(\omega) \leq \delta N \right)=-a_{\beta}(x,y; \delta),
\end{align}
where the rate function $a_{\beta}(x,y; \delta)$ is given by:
\begin{multline}
\label{eq:rate}
a_{\beta}(x, y;\delta)=\frac{1}{\beta}\bigg(\beta^2(x-\delta)(y-\delta)-\frac{\pi^2}{6}+Li_{2}(e^{-\beta\delta })+Li_{2}(e^{- \beta(x-\delta)})+Li_{2}(e^{- \beta(y-\delta)})\bigg.\\
\bigg.+Li_{2}(e^{- \beta(1-x-y+\delta)})+Li_{2}(e^{-\beta})-Li_{2}(e^{-\beta x })-Li_{2}(e^{- \beta y })-Li_{2}(e^{-\beta(1-x) })-Li_{2}(e^{-\beta(1-y)})\bigg).
\end{multline}
\end{theorem}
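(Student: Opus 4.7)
My plan is to extract the pointwise exponential decay rate of $\mathcal{M}_N^q(H_{\lfloor xN\rfloor,\lfloor yN\rfloor}=\lfloor\delta N\rfloor)$ directly from the exact formula in Proposition \ref{prop:Height-Function}, recognize it as $-a_\beta(x,y;\delta)$, and pass to the tail statements \eqref{eq:LDP1}, \eqref{eq:LDP2} by a standard Laplace-type argument: each tail is a sum of at most $N$ positive terms whose logarithmic rate is governed by the endpoint provided that $a_\beta$ is monotone on each side of its unique zero. Using $(q;q)_M=(q;q)_\infty/(q^{M+1};q)_\infty$, the last line of \eqref{eq:Height-Function} becomes
\[
(K-s)(L-s)\ln q-\ln(q;q)_\infty+\!\!\!\sum_{\text{five num.}}\!\!\!\ln(q^{M+1};q)_\infty-\!\!\!\sum_{\text{four den.}}\!\!\!\ln(q^{M+1};q)_\infty,
\]
the net coefficient $-1$ of $\ln(q;q)_\infty$ coming from the partial cancellation of four numerator and five denominator copies. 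Substituting $L=\lfloor xN\rfloor$, $K=\lfloor yN\rfloor$, $s=\lfloor\delta N\rfloor$ and $q=1-\beta/N$: the prefactor contributes $-\beta(x-\delta)(y-\delta)N+O(1)$ since $\ln q=-\beta/N+O(N^{-2})$; \eqref{eq:lnqq} gives $-\ln(q;q)_\infty=\pi^2N/(6\beta)+O(\log N)$; and \eqref{eq:lnqNq} applied to each remaining factor supplies the nine dilogarithms at scale $N/\beta$. Collecting terms, the $O(N)$-part is exactly $-a_\beta(x,y;\delta)N$ as defined in \eqref{eq:rate}.

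\textbf{Step 2: Shape of $a_\beta$.} I need (i) $a_\beta(x,y;h_\beta(x,y))=0$ and (ii) $\delta\mapsto a_\beta(x,y;\delta)$ strictly increasing on $[h_\beta,\min\{x,y\}]$ and strictly decreasing on $[\max\{x+y-1,0\},h_\beta]$. Differentiating and using \eqref{eq:Lilog} produces
\[
\frac{\partial a_\beta}{\partial\delta}=-\beta(x+y-2\delta)+\ln\frac{(1-e^{-\beta\delta})(1-e^{-\beta(1-x-y+\delta)})}{(1-e^{-\beta(x-\delta)})(1-e^{-\beta(y-\delta)})}.
\]
Setting $A=e^{-\beta x}$, $B=e^{-\beta y}$, $C=e^{-\beta}$, the defining identity \eqref{eq:mean} rewrites as $1-e^{-\beta h_\beta}=(1-A)(1-B)/(1-C)$, from which closed forms for $1-e^{-\beta(x-h_\beta)}$, $1-e^{-\beta(y-h_\beta)}$, $1-e^{-\beta(1-x-y+h_\beta)}$ follow by algebra; plugging into the derivative shows $\delta=h_\beta$ is a (indeed the unique) critical point and the second derivative is positive, giving (ii). For (i), I apply Mantel's identity \eqref{eq:Mantel} with the substitution $u=e^{-\beta(1-y)}$, $v=e^{-\beta(1-x)}$, $a=e^{-\beta(1-x-y+h_\beta)}$, $b=e^{-\beta}$: the constraint $(1-a)(1-b)=(1-u)(1-v)$ is precisely the closed form for $1-e^{-\beta(1-x-y+h_\beta)}$ derived above; the two $Li_2$ terms whose arguments exceed $1$ are folded back via the reflection identity \eqref{eq:inverse} using a real-axis branch prescription, and the quadratic and constant remainders assemble into exactly $\pi^2/6-\beta^2(x-h_\beta)(y-h_\beta)$, producing $a_\beta(x,y;h_\beta)=0$.

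\textbf{Step 3: From pointwise to tail.} For $\delta\in(h_\beta,\min\{x,y\}]$,
\[
\mathcal{M}_N^q(H_{\lfloor xN\rfloor,\lfloor yN\rfloor}\geq\lfloor\delta N\rfloor)=\sum_{s=\lfloor\delta N\rfloor}^{\min\{L,K\}}\mathcal{M}_N^q(H_{\lfloor xN\rfloor,\lfloor yN\rfloor}=s)
\]
is a sum of at most $N$ terms, each of order $\exp(-a_\beta(x,y;s/N)N+o(N))$ by Step 1, and by the monotonicity from Step 2 the largest summand is at $s=\lfloor\delta N\rfloor$, yielding the matching bounds
\[
-a_\beta(x,y;\delta)\leq\liminf\tfrac{1}{N}\ln\mathcal{M}_N^q(H\geq\lfloor\delta N\rfloor)\leq\limsup\tfrac{1}{N}\ln\mathcal{M}_N^q(H\geq\lfloor\delta N\rfloor)\leq-a_\beta(x,y;\delta).
\]
This is \eqref{eq:LDP1}; the lower-tail statement \eqref{eq:LDP2} is symmetric (or follows from the inversion symmetry \eqref{property:symmetry} applied jointly with the observation that swapping $\delta\leftrightarrow x+y-\delta$ corresponds to swapping $K\leftrightarrow N-K$, under which $h_\beta$ behaves as required).

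\textbf{Main obstacle.} The crux is Step 2, and specifically the verification $a_\beta(x,y;h_\beta)=0$: the correct substitution into Mantel's nine-term identity is not obvious from the form of $a_\beta$, and the branches of the complex logarithm in \eqref{eq:inverse} must be chosen so that imaginary contributions from both the reflection terms and the $\frac{1}{2}\ln^2(-u/v)$ remainder in \eqref{eq:Mantel} cancel exactly, leaving the real quadratic $-\beta^2(x-h_\beta)(y-h_\beta)$ and the real constant $\pi^2/6$. Once this calculation is completed, Step 1 is a careful but routine asymptotic expansion and Step 3 is standard.
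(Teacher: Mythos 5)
Your proposal follows essentially the same route as the paper: extract the pointwise exponential rate from the exact formula of Proposition \ref{prop:Height-Function} via the $q$-Pochhammer asymptotics \eqref{eq:lnqq} and \eqref{eq:lnqNq}, identify the zero of $a_\beta$ at $\delta=h_\beta(x,y)$ through Mantel's nine-term identity combined with the reflection formula (the paper's Lemma \ref{lem:Li2} uses the substitution $a=e^{-\beta}$, $b=e^{-\beta h_\beta}$, $u=e^{-\beta x}$, $v=e^{-\beta y}$, equivalent to yours), establish monotonicity on either side of $h_\beta$ from the positivity of the second derivative, and sandwich each tail between its largest term and $N$ times that term. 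This matches the paper's argument in all essentials.
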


Before giving a proof to Theorem \ref{thm:LDP}, we establish some properties of functions $h_{\beta}(x, y)$ and $a_{\beta}(x,y; \delta)$.
\begin{lemma}
\label{lem:Li2}
For $\delta=h_{\beta}(x, y)$, one has:
\begin{multline}
\label{eq:Li2}
Li_{2}(e^{- \beta(1-x-y+\delta)})=\frac{\pi^2}{6}-\beta^2(x-\delta)(y-\delta)-Li_{2}(e^{-\beta\delta })-Li_{2}(e^{- \beta(x-\delta)})\\
-Li_{2}(e^{- \beta(y-\delta)})-Li_{2}(e^{-\beta})+Li_{2}(e^{-\beta x })+Li_{2}(e^{- \beta y })+Li_{2}(e^{-\beta(1-x) })+Li_{2}(e^{-\beta(1-y)}).
\end{multline}
\end{lemma}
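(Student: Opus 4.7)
The plan is to derive \eqref{eq:Li2} directly from Mantel's nine-term functional equation \eqref{eq:Mantel}, combined with Euler's reflection formula \eqref{eq:inverse}. The first step is to recast the defining relation \eqref{eq:mean} for $h_{\beta}(x,y)$ algebraically. Exponentiating and regrouping shows that $\delta=h_{\beta}(x,y)$ satisfies
\begin{equation*}
(1-e^{-\beta\delta})(1-e^{-\beta}) = (1-e^{-\beta x})(1-e^{-\beta y}),
\end{equation*}
which is exactly the constraint $(1-a)(1-b)=(1-u)(1-v)$ required by \eqref{eq:Mantel} under the identification $a=e^{-\beta\delta}$, $b=e^{-\beta}$, $u=e^{-\beta x}$, $v=e^{-\beta y}$. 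The same manipulation also shows $\delta\leq\min(x,y)$, which is used below.

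I would then substitute this choice into \eqref{eq:Mantel}. One checks $ab/uv = e^{-\beta(1-x-y+\delta)}$, matching the left-hand side of \eqref{eq:Li2}. Of the eight $Li_{2}$ terms on the right of Mantel's identity, six already have arguments in $(0,1)$ and correspond to $Li_{2}(e^{-\beta(1-x)})$, $Li_{2}(e^{-\beta(1-y)})$, $Li_{2}(e^{-\beta x})$, $Li_{2}(e^{-\beta y})$, $-Li_{2}(e^{-\beta\delta})$, $-Li_{2}(e^{-\beta})$ appearing in \eqref{eq:Li2}. The remaining two, $Li_{2}(e^{-\beta(\delta-x)})$ and $Li_{2}(e^{-\beta(\delta-y)})$, have arguments $\geq 1$ since $\delta\leq\min(x,y)$, and I would apply \eqref{eq:inverse} to rewrite each as $-Li_{2}(e^{-\beta(x-\delta)})-\pi^2/6-\tfrac{1}{2}\ln^2(-e^{-\beta(x-\delta)})$ and its $y$-analogue; these supply the final two target dilogarithms $-Li_{2}(e^{-\beta(x-\delta)})$ and $-Li_{2}(e^{-\beta(y-\delta)})$.

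The last step is to collect the three $\tfrac{1}{2}\ln^2(-\cdot)$ contributions---two from the reflection step and one of the form $\tfrac{1}{2}\ln^2(-e^{-\beta(x-y)})$ coming directly from \eqref{eq:Mantel}. Taking real parts with $\ln(-e^{-\beta r}) = -\beta r \pm i\pi$, each such term contributes $\tfrac{\beta^2 r^2}{2}-\tfrac{\pi^2}{2}$. With the appropriate signs this sums to
\begin{equation*}
\tfrac{\beta^2}{2}\bigl[(x-y)^2 - (x-\delta)^2 - (y-\delta)^2\bigr] = -\beta^2(x-\delta)(y-\delta)
\end{equation*}
via the elementary polynomial identity $(x-y)^2-(x-\delta)^2-(y-\delta)^2=-2(x-\delta)(y-\delta)$, while the constants $\pi^2/3 + \pi^2/3 - \pi^2/2$ collapse to $\pi^2/6$, matching the two non-dilogarithm terms on the right of \eqref{eq:Li2}.

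The main obstacle I anticipate is the branch bookkeeping for $\ln(-z)$. Each of the three logarithms $\ln(-e^{-\beta r})$ is defined only up to a sign $\pm i\pi$, and only a coherent set of choices---tied to the conventions used in the statements of \eqref{eq:Mantel} and \eqref{eq:inverse}---will make the imaginary parts $\pm i\pi\beta(x-\delta)$, $\pm i\pi\beta(y-\delta)$, $\mp i\pi\beta(x-y)$ cancel. Since both sides of \eqref{eq:Li2} are manifestly real for the parameters under consideration, such a cancellation must occur for the correct branches; verifying this compatibility carefully is the only genuinely delicate point of the argument.
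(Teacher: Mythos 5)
Your proposal is correct and is essentially the paper's own proof: the paper also derives \eqref{eq:Li2} by substituting $a,b,u,v$ equal to $e^{-\beta}$, $e^{-\beta\delta}$, $e^{-\beta x}$, $e^{-\beta y}$ (your swap of $a$ and $b$ is immaterial by the symmetry of \eqref{eq:Mantel}) and then applying the reflection formula \eqref{eq:inverse} to the two terms with arguments exceeding $1$. Your verification of the constraint $(1-a)(1-b)=(1-u)(1-v)$, the identity $(x-y)^2-(x-\delta)^2-(y-\delta)^2=-2(x-\delta)(y-\delta)$, and the collection of the $\pi^2$ constants all check out, and you correctly flag the branch bookkeeping as the only delicate point.
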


\begin{proof}
Identity \eqref{eq:Li2} follows from the nine-term functional identity \eqref{eq:Mantel} by setting $a=e^{-\beta}$, $b=e^{-\beta h_{\beta}(x, y)}$, $u=e^{-\beta x}$, and $v=e^{-\beta y}$, where we also use the reflection property \eqref{eq:inverse}.
\end{proof}

\begin{lemma}
\label{lem:rate function}
For any fixed $\beta \in \mathbb{R}_{>0}$ and any $x,y\in(0,1)$, $a_{\beta}(x,y; \delta)$ is a convex function on $[\max\{x+y-1,0\}, \min\{x,y\}]$ with the following properties:
\begin{align}
\label{eq:zero}
&a_{\beta}(x,y; h_{\beta}(x, y))=0,\\
\label{eq:zero-first-order}
&\left.\frac{d}{d\delta}a_{\beta}(x,y; \delta)\right|_{h_{\beta}(x, y)}=0,\\
\label{eq:zero-second-order}
&\left.\frac{d^2}{d^2\delta}a_{\beta}(x,y; \delta)\right|_{h_{\beta}(x, y)}=\beta\exp(2d_{\beta}(x,y)),
\end{align}
where the function $d_{\beta}(x,y)$ is defined via
\begin{align}
\label{eq:variance}
d_{\beta}(x,y)=\frac{1}{2}\ln(1- e^{-\beta})-\frac{1}{2}\ln(1- e^{- \beta(x-h_{\beta}(x, y))})-\frac{1}{2}\ln(1- e^{-\beta(y-h_{\beta}(x, y)) }).
\end{align}
In particular, $a_{\beta}(x,y; \delta)\geq 0$ and takes its minimum value at a unique point $\delta=h_{\beta}(x, y)$.
\end{lemma}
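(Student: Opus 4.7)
The plan is to verify \eqref{eq:zero}, \eqref{eq:zero-first-order}, \eqref{eq:zero-second-order}, and convexity by direct calculus on the explicit formula \eqref{eq:rate}, relying on \eqref{eq:Lilog} together with the algebraic identities forced by the definition \eqref{eq:mean} of $h_\beta$. Identity \eqref{eq:zero} is nothing but Lemma \ref{lem:Li2} rearranged. The uniqueness of the minimum will follow from strict convexity on the interior together with \eqref{eq:zero-first-order}.

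Differentiating \eqref{eq:rate} term by term, using the chain rule and \eqref{eq:Lilog} (so that $\frac{d}{d\delta}Li_2(e^{-\beta u(\delta)}) = \beta u'(\delta)\ln(1-e^{-\beta u(\delta)})$), I obtain
\[
\frac{d}{d\delta}a_\beta(x,y;\delta) = -\beta(x+y-2\delta) + \ln\frac{(1-e^{-\beta\delta})(1-e^{-\beta(1-x-y+\delta)})}{(1-e^{-\beta(x-\delta)})(1-e^{-\beta(y-\delta)})}.
\]
To specialise this to $\delta = h_\beta$ I would first extract from \eqref{eq:mean}, with the abbreviation $A := e^{-\beta x}+e^{-\beta y}-e^{-\beta(x+y)}-e^{-\beta}$ (so that $e^{-\beta h_\beta} = A/(1-e^{-\beta})$), the four factorisations
\[
1-e^{-\beta h_\beta}=\frac{(1-e^{-\beta x})(1-e^{-\beta y})}{1-e^{-\beta}}, \qquad 1-e^{-\beta(1-x-y+h_\beta)}=\frac{(1-e^{-\beta(1-x)})(1-e^{-\beta(1-y)})}{1-e^{-\beta}},
\]
together with $1-e^{-\beta(x-h_\beta)} = (1-e^{-\beta x})(e^{-\beta y}-e^{-\beta})/A$ and its $x \leftrightarrow y$ analogue. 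Substituting these, the four factors $(1-e^{-\beta x})$, $(1-e^{-\beta y})$, $(1-e^{-\beta(1-x)})$, $(1-e^{-\beta(1-y)})$ telescope and the log-ratio collapses to $\ln[A^{2}/((1-e^{-\beta})^{2} e^{-\beta(x+y)})] = \beta(x+y-2h_\beta)$, exactly cancelling the first summand and proving \eqref{eq:zero-first-order}.

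A second differentiation yields
\[
\frac{d^2}{d\delta^2}a_\beta(x,y;\delta) = -2\beta + \beta \sum_{z \in \{\delta,\, x-\delta,\, y-\delta,\, 1-x-y+\delta\}} \frac{1}{1-e^{-\beta z}}.
\]
On the interior of $[\max\{x+y-1,0\},\min\{x,y\}]$ every such $z$ is strictly positive, so each reciprocal exceeds $1$, the bracketed sum exceeds $4$, and the second derivative exceeds $2\beta > 0$; this gives strict convexity in the interior and, by continuity of $a_\beta$ at the endpoints (where only $Li_2(1) = \pi^2/6$ enters), convexity on the closed interval. Combined with \eqref{eq:zero-first-order} and the fact, easily verified from \eqref{eq:mean}, that $h_\beta$ lies in the open interval, strict convexity forces $h_\beta$ to be the unique minimum, and hence $a_\beta \geq 0$ throughout. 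For \eqref{eq:zero-second-order} I substitute the four factorisations from the previous paragraph into the bracket at $\delta = h_\beta$ and simplify using $e^{-\beta z}-e^{-\beta} = e^{-\beta z}(1-e^{-\beta(1-z)})$; the four rational summands minus $2$ must collapse to $(1-e^{-\beta})/[(1-e^{-\beta(x-h_\beta)})(1-e^{-\beta(y-h_\beta)})]$, which is precisely $e^{2d_\beta(x,y)}$ by \eqref{eq:variance}. I expect this collapse to be the main obstacle: the cleanest route is to clear the common denominator $(1-e^{-\beta x})(1-e^{-\beta y})(1-e^{-\beta(1-x)})(1-e^{-\beta(1-y)})$ and exploit the twin decompositions $A = e^{-\beta x}(1-e^{-\beta y}) + e^{-\beta y}(1-e^{-\beta(1-y)}) = e^{-\beta x}(1-e^{-\beta(1-x)}) + e^{-\beta y}(1-e^{-\beta x})$ to reduce the identity to polynomial form.
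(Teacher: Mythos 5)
Your proposal is correct and follows essentially the same route as the paper: \eqref{eq:zero} via Lemma \ref{lem:Li2}, explicit first and second derivatives of $a_\beta$ using \eqref{eq:Lilog} (your forms agree with \eqref{eq:first-order-derivative} and \eqref{eq:second-order-derivative} after rewriting $\tfrac{1}{1-e^{-\beta z}}=1+\tfrac{e^{-\beta z}}{1-e^{-\beta z}}$), positivity of the second derivative for convexity, and the factorizations of $1-e^{-\beta(\cdot)}$ at $\delta=h_\beta$ to verify \eqref{eq:zero-first-order} and the identity \eqref{eq:d-beta-xy}. The only difference is that you spell out the algebra the paper dismisses as ``a direct check,'' and your claimed collapse to $(1-e^{-\beta})/[(1-e^{-\beta(x-h_\beta)})(1-e^{-\beta(y-h_\beta)})]$ is indeed the correct identity.
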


\begin{proof}
Identity \eqref{eq:Li2} implies that $h_{\beta}(x, y)$ is the zero of $a_{\beta}(x,y; \delta)$; thus, \eqref{eq:zero} holds. With the use of \eqref{eq:Lilog} we obtain 
\begin{multline}
\label{eq:first-order-derivative}
\frac{d}{d\delta}a_{\beta}(x,y; \delta)=\beta(2\delta-x-y)-\ln(1- e^{-(x-\delta) \beta})-\ln(1- e^{-(y-\delta) \beta})\\
+\ln(1- e^{-\delta \beta})+\ln(1- e^{-(1-x-y+\delta) \beta}).
\end{multline}
It is a direct check that $h_{\beta}(x, y)$ is a zero of $\frac{d}{d\delta}a_{\beta}(x,y; \delta)$; thus, \eqref{eq:zero-first-order} holds. Further, the second derivative of $a_{\beta}(x,y; \delta)$ is given by the following formula:
\begin{align}
\label{eq:second-order-derivative}
\frac{d^2}{d^2\delta}a_{\beta}(x,y; \delta)=\beta\left(2+\frac{e^{-\delta\beta}}{1-e^{-\delta\beta}}+\frac{e^{-(x-\delta) \beta}}{1-e^{-(x-\delta) \beta}}+\frac{e^{-(y-\delta) \beta}}{1-e^{-(y-\delta) \beta}}+\frac{e^{-(1-x-y+\delta) \beta}}{1-e^{-(1-x-y+\delta) \beta}}\right).
\end{align}
The right-hand side of \eqref{eq:second-order-derivative} is positive for any $\beta \in \mathbb{R}_{>0}$ and $x,y\in(0,1)$; therefore, $a_{\beta}(x,y; \delta)$ is a convex function on $[\max\{x+y-1,0\}, \min\{x,y\}]$. When $\delta=h_{\beta}(x,y)$, it is a direct check that the following identity holds:
\begin{align}
\label{eq:d-beta-xy}
2+\frac{e^{-\delta\beta}}{1-e^{-\delta\beta}}+\frac{e^{-(x-\delta) \beta}}{1-e^{-(x-\delta) \beta}}+\frac{e^{-(y-\delta) \beta}}{1-e^{-(y-\delta) \beta}}+\frac{e^{-(1-x-y+\delta) \beta}}{1-e^{-(1-x-y+\delta) \beta}}=\exp(2d_{\beta}(x,y)),
\end{align}
which concludes the proof.
\end{proof}

\begin{proof}[Proof of Theorem \ref{thm:LDP}]
By the exact formula \eqref{eq:Height-Function}, we have\footnote{Here and in the sequel we omit symbols $\lfloor$, $\rfloor$ in the formulas for their better readability. For example, we write in the next formula $\delta N$ instead of $\lfloor \delta N \rfloor$}:
\begin{multline*}
\mathcal{M}_{N}^{q} \left( H_{x N, y N}(\omega)=\delta N \right)=\exp \bigg( (x-\delta)(y-\delta)N^2 \cdot \ln q  + \ln (q^{\delta N+1} ; q)_{\infty} + \ln (q^{(x-\delta)N+1} ; q)_{\infty} \bigg.\\
\bigg.   + \ln (q^{(y-\delta)N+1} ; q)_{\infty} + \ln (q^{(1-x-y+\delta)N+1} ; q)_{\infty} + \ln (q^{N+1} ; q)_{\infty}-\ln (q^{xN+1} ; q)_{\infty}\bigg.\\
\bigg.- \ln (q^{yN+1} ; q)_{\infty} - \ln (q^{(1-x)N+1} ; q)_{\infty} - \ln (q^{(1-y)N+1} ; q)_{\infty} - \ln (q ; q)_{\infty}\bigg).
\end{multline*}
Under the scaling \eqref{eq:scale}, we can get the following result by using the asymptotics \eqref{eq:lnqq} and \eqref{eq:lnqNq}:
\begin{align*}
\mathcal{M}_{N}^{q} \left( H_{x N, y N}(\omega)=\delta N \right)=\exp\left(-a_{\beta}(x,y; \delta)N+O(\ln(N))\right),
\end{align*}
where $a_{\beta}(x,y; \delta)$ is given by \eqref{eq:rate}. By Lemma \ref{lem:rate function}, we have:
\begin{multline*}
\lim_{N\to\infty}\frac{1}{N} \ln \mathcal{M}_{N}^{q} \left( H_{x N, y N}(\omega)=\delta N \right) \leq \lim_{N\to\infty}\frac{1}{N} \ln \mathcal{M}_{N}^{q} \left( H_{x N, y N}(\omega)\geq\delta N \right) \\
\leq \lim_{N\to\infty}\frac{1}{N} \ln \left( (\min\{x,y\}N+1) \mathcal{M}_{N}^{q} \left( H_{x N, y N}(\omega)=\delta N \right)\right)
\end{multline*}
for $\delta \in (h_{\beta}(x, y), \min\{x,y\}]$; this implies \eqref{eq:LDP1}. Analogously, we have:
\begin{multline*}
\lim_{N\to\infty}\frac{1}{N} \ln \mathcal{M}_{N}^{q} \left( H_{x N, y N}(\omega)=\delta N \right) \leq \lim_{N\to\infty}\frac{1}{N} \ln \mathcal{M}_{N}^{q} \left( H_{x N, y N}(\omega)\leq \delta N \right) \\
\leq \lim_{N\to\infty}\frac{1}{N} \ln \left( (\min\{x,y\}N+1) \mathcal{M}_{N}^{q} \left( H_{x N, y N}(\omega)=\delta N \right)\right),
\end{multline*}
for $\delta \in [\max\{x+y-1,0\}, h_{\beta}(x, y))$; that implies \eqref{eq:LDP2}.
\end{proof}

A corollary of Theorem \ref{thm:LDP} is the strong law of large numbers for the height function \eqref{def:Height-Function}.
\begin{corollary}
\label{cor:LLN}
Fix a parameter $\beta \in \mathbb{R}_{>0}$, and let $\omega$ be the random permutation distributed according to the Mallows measure $\mathcal{M}_{N}^{q}$, $N \in \mathbb{N}$. For any $0 < x, y <1$, the following convergence holds:
\begin{align}
\label{eq:LLN}
\frac{H_{x N, y N}(\omega)}{N}\xrightarrow{\text{a.s.}}h_{\beta}(x,y), \qquad \text{as}\ N\to\infty,
\end{align}
where the function $h_{\beta}(x, y)$ is given by \eqref{eq:mean}.
\end{corollary}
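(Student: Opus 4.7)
The plan is to derive the almost sure convergence directly from the exponential decay of deviation probabilities provided by Theorem \ref{thm:LDP}. Fix $\eps > 0$ small enough that both $\delta_+ := h_{\beta}(x,y) + \eps$ and $\delta_- := h_{\beta}(x,y) - \eps$ lie in the admissible interval $[\max\{x+y-1,0\}, \min\{x,y\}]$. By Lemma \ref{lem:rate function}, the rate function $a_{\beta}(x,y; \cdot)$ is strictly convex with a unique zero at $\delta = h_{\beta}(x,y)$, so
$$
c_{\eps} := \min\{ a_{\beta}(x,y; \delta_+),\ a_{\beta}(x,y; \delta_-) \} > 0.
$$

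Applying \eqref{eq:LDP1} to $\delta_+$ and \eqref{eq:LDP2} to $\delta_-$, I obtain that for all sufficiently large $N$,
$$
\mathcal{M}_N^q\!\left( H_{xN, yN}(\omega) \geq \delta_+ N \right) \leq e^{-c_{\eps} N / 2}, \qquad \mathcal{M}_N^q\!\left( H_{xN, yN}(\omega) \leq \delta_- N \right) \leq e^{-c_{\eps} N / 2}.
$$
The union bound then yields
$$
\mathcal{M}_N^q\!\left( \left| \frac{H_{xN,yN}(\omega)}{N} - h_{\beta}(x,y) \right| > \eps \right) \leq 2 e^{-c_{\eps} N / 2},
$$
which is summable in $N$.

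The final step is the Borel--Cantelli lemma, which requires a coupling of the permutations $\omega^{(N)}$ for different $N$ on a common probability space. Such a coupling is furnished by the $q$-shuffle algorithm of Section \ref{ssec:q-shuffle}: realize, once and for all, an infinite array of independent $\mathrm{Uniform}[0,1]$ random variables, and for each $N$ transform the appropriate finite sub-array into the truncated geometric variables $\xi_1, \dots, \xi_N$ needed to run the algorithm and produce $\omega^{(N)}$. Summability of the tail probabilities then implies that, for each fixed $\eps$, almost surely only finitely many of the events $\{|H_{xN,yN}/N - h_{\beta}(x,y)| > \eps\}$ occur; intersecting over a countable sequence $\eps_k \downarrow 0$ gives the claimed almost sure convergence \eqref{eq:LLN}.

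The substantive work has already been done in Theorem \ref{thm:LDP} and Lemma \ref{lem:rate function}; the only genuine care needed here is a bookkeeping one, namely the choice of coupling so that ``almost surely'' has an unambiguous meaning across the varying-$N$ probability spaces. Once the coupling via an ambient sequence of independent uniforms is fixed, the remaining argument is a textbook application of Borel--Cantelli and requires no further estimates.
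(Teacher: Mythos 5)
Your proof is correct and follows essentially the same route as the paper: summability of the deviation probabilities obtained from the large deviation bounds of Theorem \ref{thm:LDP} and Lemma \ref{lem:rate function}, followed by the Borel--Cantelli lemma. Your only addition is the explicit coupling of the permutations across $N$, which the paper leaves implicit; since the first Borel--Cantelli lemma uses only the marginal probabilities, any coupling works, so this is a harmless (and reasonable) clarification rather than a different argument.
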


\begin{proof}
One can get the following result in a standard way by using \eqref{eq:LDP1} and \eqref{eq:LDP2}:
\begin{align*}
\sum_{N=1}^{\infty} \mathcal{M}_{N}^{q}  \left\{\bigg\vert \frac{H_{x N, y N}(\omega)}{N}-h_{\beta}(x, y) \bigg\vert \geq \epsilon \right\}<\infty.
\end{align*}
Then one obtains the strong law of large numbers \eqref{eq:LLN} by using the Borel-Cantelly lemma.
\end{proof}

\begin{remark}
The weak law of large numbers
\begin{align*}
\lim_{\epsilon \to 0} \lim_{N \to \infty} \mathcal{M}_{N}^{q}  \left\{\bigg\vert \frac{H_{x N, y N}(\omega)}{N}-h_{\beta}(x, y) \bigg\vert>\epsilon \right\}=0,
\end{align*}
was first established in \cite{Sta09}. One can recover the density function $u(x,y)$ from \cite[Theorem 1.1]{Sta09} with the use of $h_{\beta}(x, y)$ via
\begin{align*}
\frac{\partial^2 h_{\beta}(x, y)}{\partial x \partial y} =\frac{\frac{\beta}{2}\sinh\frac{\beta}{2}}{\left(e^{\frac{\beta}{4}}\cosh\frac{\beta(x-y)}{2}-e^{-\frac{\beta}{4}}\cosh\frac{\beta(x+y-1)}{2}\right)^2}=u(x,y).
\end{align*}
\end{remark}

\begin{remark}
The large deviation principle for the Mallows measure has been established in \cite{Muk16, SW18}.  We can match our rate function \eqref{eq:rate} with the function $\tilde{\Phi}(\theta_1,\theta_2;t_{11}, t_{12}, t_{21},t_{22})$ in \cite[Theorem 3.7]{SW18} in the following way:
\begin{align*}
a_{\beta}(x, y;\delta)=\tilde{\Phi}(x,y;\delta, x-\delta, y-\delta,1-x-y-\delta).
\end{align*}
\end{remark}

\begin{remark}
As noted in \cite{SW18}, one can also derive asymptotics of statistics of the Mallows measure by using the q-Stirling’s formula \cite{Moa84}. In our setting, the height function \eqref{def:Height-Function} can be rewritten in the following form:
\begin{align}
\label{eq:Height-Function-factorial}
\mathcal{M}_{N}^{q} (f_{N,K,L}(\omega)=s)=q^{(K-s)(L-s)}\frac{[K]!_{q}[L]!_{q}[N-K]!_{q}[N-L]!_{q}}{[s]!_{q}[K-s]!_{q}[L-s]!_{q}[N+s-K-L]!_{q}[N]!_{q}},
\end{align}
where we use the q-factorial $[n]!_{q}=\frac{(q ; q)_{n}}{(1-q)^n}$. Then one can perform the asymptotics by using the following q-Stirling’s formula:
\begin{align*}
\ln\left(\frac{[N]!_{q}}{N!}\right)=N\int_{0}^{1}\ln\left(\frac{1-e^{-\beta x}}{\beta x}\right)+\frac{\beta}{2}+\frac{1}{2}\ln\left(\frac{1-e^{-\beta}}{\beta}\right)+R_{n}(\beta),
\end{align*}
where we use the scaling \eqref{eq:scale}, and $R_{n}(\beta) \to 0$ as $N\to\infty$. We refer to \cite[Theorem 5.1.1]{Wal15} for a new proof of the q-Stirling’s formula. In this way, \cite[Lemma 5.6.2]{Wal15} provided a proof of the large deviation principle for the Mallows measure.
\end{remark}

\subsection{Local central limit theorem: One point}
Our main result is the following local central limit theorem for the height function \eqref{def:Height-Function}. We denote the density of the standard normal distribution by $\mathcal{N}$.
\begin{theorem}
\label{thm:LLT}
Fix a parameter $\beta \in \mathbb{R}_{>0}$, and let $\omega$ be the random permutation distributed according to the Mallows measure $\mathcal{M}_{N}^{q} $, where $q=1-\beta/N$ and $N \in \mathbb{N}$. Let $A_N$ be a non-decreasing sequence such that $\lim_{N\to\infty}A_N/N^{1/6}=0$. Then, for any $0 < x, y <1$, the following convergence holds:
\begin{align}
\label{eq:LLT}
\max_{|k-h_{\beta}(x, y) N| < \sqrt{N} \cdot A_N}\left| \frac{\sigma_{N, \beta}(x,y) \mathcal{M}_{N}^{q}\left( H_{x N, y N}(\omega)= k \right)}{\mathcal{N}\left(\frac{k-h_{\beta}(x, y) N}{\sigma_{N, \beta}(x,y)}\right)} - 1\right|=O\left(\frac{A_N^3}{\sqrt{N}}\right),
\end{align}
where the function $h_{\beta}(x, y)$ is given by \eqref{eq:mean}, $\sigma_{N, \beta}(x,y)=\frac{\sqrt{N}}{\sqrt{\beta}\exp(d_{\beta}(x,y))}$, and the function $d_{\beta}(x,y)$ is given by \eqref{eq:variance}.
\end{theorem}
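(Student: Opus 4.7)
The plan is to start from the exact product formula \eqref{eq:Height-Function} with $K=\lfloor xN\rfloor$, $L=\lfloor yN\rfloor$. For each $k$ in the window $|k-h_\beta(x,y)N|<A_N\sqrt{N}$, I would set $\alpha=(k-h_\beta(x,y)N)/\sqrt N$ and take the logarithm of the right-hand side of \eqref{eq:Height-Function}. The four $q$-Pochhammer factors in the numerator whose exponent depends on $k$, namely $(q^{s+1};q)_\infty$, $(q^{K-s+1};q)_\infty$, $(q^{L-s+1};q)_\infty$, $(q^{N+s+1-K-L};q)_\infty$, are expanded via \eqref{eq:lnqsqrtNq} with $(\delta,\alpha)$ taken to be $(h_\beta,\alpha)$, $(x-h_\beta,-\alpha)$, $(y-h_\beta,-\alpha)$, $(1-x-y+h_\beta,\alpha)$ respectively; the remaining Pochhammer factors (whose exponents do not depend on $k$) are expanded via \eqref{eq:lnqNq}; and $(q;q)_\infty$ is expanded via \eqref{eq:lnqq}. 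The prefactor $q^{(K-s)(L-s)}$ is expanded using $\ln q=-\beta/N-\beta^2/(2N^2)+O(N^{-3})$.

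Next I would collect contributions by order in $N$. The coefficient of $N$ equals $-a_\beta(x,y;h_\beta(x,y))$, which vanishes by \eqref{eq:zero} in Lemma \ref{lem:rate function} (whose proof rests on Mantel's identity via Lemma \ref{lem:Li2}). The coefficient of $\alpha\sqrt{N}$ equals $-\alpha\,\tfrac{d}{d\delta}a_\beta(x,y;\delta)|_{\delta=h_\beta}$, which vanishes by \eqref{eq:zero-first-order}. The coefficient of $\alpha^2$ becomes $-\tfrac{1}{2}\beta\exp(2d_\beta(x,y))$ after invoking the identity \eqref{eq:d-beta-xy}; rewriting $\alpha^2=(k-h_\beta N)^2/N$ this is exactly $-(k-h_\beta N)^2/(2\sigma_{N,\beta}^2)$, the target Gaussian exponent. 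The $\ln N$ contribution arises only from the $-\tfrac{1}{2}\ln(\beta/(2\pi N))$ piece of \eqref{eq:lnqq} and equals $-\tfrac{1}{2}\ln(2\pi N/\beta)$, supplying the factor $\sqrt{\beta/(2\pi N)}$ in the prefactor.

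The remaining $O(1)$ constants consist of nine Pochhammer constants of the form $\tfrac{1}{2}Li_2(e^{-\beta D})-\tfrac{1+D\beta}{2}\ln(1-e^{-\beta D})$ with alternating signs between numerator and denominator, the $-\pi^2/12$ from $(q;q)_\infty$, and $-\beta^2(x-h_\beta)(y-h_\beta)/2$ from $q^{(K-s)(L-s)}$; they must collapse to exactly $d_\beta(x,y)$. The dilogarithm parts reduce by \eqref{eq:Li2} to $\pi^2/12-\beta^2(x-h_\beta)(y-h_\beta)/2$. The logarithmic parts can then be handled using two product identities that follow directly from the definition \eqref{eq:mean}, namely $(1-e^{-\beta h_\beta})(1-e^{-\beta})=(1-e^{-\beta x})(1-e^{-\beta y})$ and its symmetric counterpart $(1-e^{-\beta(1-x-y+h_\beta)})(1-e^{-\beta})=(1-e^{-\beta(1-x)})(1-e^{-\beta(1-y)})$, combined with the first-order vanishing relation \eqref{eq:zero-first-order} in the form \eqref{eq:first-order-derivative} to control the $D\ln(1-e^{-\beta D})$ weighted sums. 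After these substitutions the total collapses to $d_\beta(x,y)$ as defined in \eqref{eq:variance}, matching the factor $\exp(d_\beta)=\sqrt{N}/(\sqrt{\beta}\,\sigma_{N,\beta})$ needed in the prefactor. This final algebraic reduction of the $O(1)$ constants is, I expect, the most delicate bookkeeping step of the proof. The uniform $O(A_N^3/\sqrt N)$ error bound on the ratio then follows because the $O(\alpha^3/\sqrt N)+O(\alpha^4/N)$ error in \eqref{eq:lnqsqrtNq} is uniformly $O(A_N^3/\sqrt N)$ whenever $|\alpha|\le A_N=o(N^{1/6})$, and $\exp(O(A_N^3/\sqrt N))-1=O(A_N^3/\sqrt N)$.
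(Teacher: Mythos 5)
Your proposal follows essentially the same route as the paper: the exact product formula \eqref{eq:Height-Function}, the Pochhammer asymptotics \eqref{eq:lnqq}, \eqref{eq:lnqsqrtNq}, \eqref{eq:lnqNq} with the same $(\delta,\alpha)$ assignments, cancellation of the $N$ and $\sqrt{N}$ coefficients via Lemma \ref{lem:Li2} and \eqref{eq:zero-first-order}, identification of the variance via \eqref{eq:d-beta-xy}, and the reduction of the $O(1)$ constant to $d_{\beta}(x,y)$ (the paper isolates this last step as Lemma \ref{lem:lndelta}). The only caveat is in that final constant-term reduction: since the logs $\ln(1-e^{-\beta(x-\delta)})$ and $\ln(1-e^{-\beta(y-\delta)})$ enter \eqref{eq:c-beta-xy} with the asymmetric weights $\tfrac{x\beta}{2}$ and $\tfrac{y\beta}{2}$, the first-order relation \eqref{eq:first-order-derivative} only controls their sum, so you in fact need the individual factorizations of $1-e^{-\beta(x-\delta)}$ and $1-e^{-\beta(y-\delta)}$ (also direct consequences of \eqref{eq:mean}, and recorded in the paper's Lemma \ref{lem:lndelta}) in addition to the two product identities you list.
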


\begin{remark}
Theorem \ref{th:main-intro} is an immediate corollary of Theorem \ref{thm:LLT}.
\end{remark}

Before proving Theorem \ref{thm:LLT}, we record the following property of function $h_{\beta}(x, y)$:
\begin{lemma}
\label{lem:lndelta}
For $\delta=h_{\beta}(x, y)$, one has:
\begin{multline}
\label{eq:c-beta-xy}
\frac{\beta^2}{2}(x+y)\delta-\beta^2xy-\frac{1}{2}\ln(1- e^{-\delta \beta})-\frac{x \beta}{2}\ln(1- e^{-(x-\delta) \beta})-\frac{y \beta}{2}\ln(1- e^{-(y-\delta) \beta})\\
-\frac{1+(1-x-y) \beta}{2}\ln(1- e^{-(1-x-y+\delta) \beta})+\frac{1+x \beta}{2}\ln(1- e^{-x \beta})+\frac{1+y\beta}{2}\ln(1- e^{-y \beta})\\
+\frac{1+(1-x)\beta}{2}\ln(1- e^{-(1-x)\beta})+\frac{1+(1-y)\beta}{2}\ln(1- e^{-(1-y)\beta})-\left(1+\frac{\beta}{2}\right)\ln(1- e^{-\beta})=0.
\end{multline}
\end{lemma}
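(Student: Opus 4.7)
The plan is to derive \eqref{eq:c-beta-xy} by differentiating the vanishing identity $a_\beta(x,y;h_\beta(x,y))=0$ from Lemma \ref{lem:rate function} with respect to $\beta$ and then collapsing the resulting logarithmic identity using the defining relation for $h_\beta$, its algebraic ``dual'', and the critical-point equation \eqref{eq:first-order-derivative}.

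First, since $a_\beta(x,y;h_\beta(x,y))=0$ holds for every $\beta \in \R_{>0}$ by \eqref{eq:zero}, differentiating in $\beta$ and applying the chain rule gives
$$0 = \partial_\beta a_\beta\big|_{\delta=h_\beta} + \partial_\delta a_\beta\big|_{\delta=h_\beta}\cdot \partial_\beta h_\beta = \partial_\beta a_\beta\big|_{\delta=h_\beta},$$
where the second equality uses \eqref{eq:zero-first-order}. Combined with $a_\beta|_{\delta=h_\beta}=0$ this implies $\partial_\beta(\beta a_\beta)|_{\delta=h_\beta}=0$. Evaluating the left-hand side explicitly via $\frac{d}{d\beta}Li_2(e^{-\beta z}) = z\ln(1-e^{-\beta z})$ produces the auxiliary identity, which I denote (IV):
\begin{multline*}
2\beta(x-\delta)(y-\delta) + \delta\ln(1-e^{-\beta\delta}) + (x-\delta)\ln(1-e^{-\beta(x-\delta)}) + (y-\delta)\ln(1-e^{-\beta(y-\delta)})\\
 + (1-x-y+\delta)\ln(1-e^{-\beta(1-x-y+\delta)}) + \ln(1-e^{-\beta}) - x\ln(1-e^{-\beta x}) - y\ln(1-e^{-\beta y})\\
 - (1-x)\ln(1-e^{-\beta(1-x)}) - (1-y)\ln(1-e^{-\beta(1-y)}) = 0 \qquad \text{at } \delta = h_\beta(x,y).
\end{multline*}

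Denote the left-hand side of \eqref{eq:c-beta-xy} by $Z$. I would then verify that $Z + \frac{\beta}{2}\cdot(\text{IV}) = 0$. Writing $f_i$ for the logarithms of $1-e^{-\beta\delta}, 1-e^{-\beta(x-\delta)}, 1-e^{-\beta(y-\delta)}, 1-e^{-\beta(1-x-y+\delta)}$ ($i=1,\ldots,4$) and $g_j$ for $\ln(1-e^{-\beta}), \ln(1-e^{-\beta x}), \ln(1-e^{-\beta y}), \ln(1-e^{-\beta(1-x)}), \ln(1-e^{-\beta(1-y)})$ ($j=0,\ldots,4$), collecting coefficients in $Z+\frac{\beta}{2}(\text{IV})$ yields the polynomial piece $\beta^2\delta^2 - \frac{\beta^2(x+y)\delta}{2}$ plus the logarithmic combination
$$-\tfrac{1}{2}(f_1+f_4) + \tfrac{\beta\delta}{2}(f_1+f_4-f_2-f_3) + \tfrac{1}{2}(g_1+g_2+g_3+g_4) - g_0.$$
Two product identities finish the job: the defining relation $(1-e^{-\beta\delta})(1-e^{-\beta})=(1-e^{-\beta x})(1-e^{-\beta y})$, which expands \eqref{eq:mean} directly, and its dual $(1-e^{-\beta(1-x-y+\delta)})(1-e^{-\beta})=(1-e^{-\beta(1-x)})(1-e^{-\beta(1-y)})$, which follows by short algebra (both reduce to the symmetric assertion $A+B-AB = D+E-DE$ in the shorthand $A=e^{-\beta x}, B=e^{-\beta y}, D=e^{-\beta\delta}, E=e^{-\beta}$). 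Summing their logarithmic forms gives $g_1+g_2+g_3+g_4 = f_1+f_4+2g_0$, so $\tfrac{1}{2}(g_1+g_2+g_3+g_4) - g_0 = \tfrac{1}{2}(f_1+f_4)$, which cancels the $-\tfrac{1}{2}(f_1+f_4)$ term. Finally, \eqref{eq:first-order-derivative} at $\delta=h_\beta$ gives $f_1+f_4-f_2-f_3 = \beta(x+y-2\delta)$, so $\tfrac{\beta\delta}{2}(f_1+f_4-f_2-f_3) = \tfrac{\beta^2\delta(x+y)}{2} - \beta^2\delta^2$, which exactly cancels the polynomial piece. Hence $Z + \tfrac{\beta}{2}(\text{IV}) = 0$, and since $(\text{IV})=0$ this forces $Z=0$.

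The main obstacle is the coefficient bookkeeping in the final step: each of the ten logarithmic coefficients in \eqref{eq:c-beta-xy} has a slightly different form, so matching them simultaneously against (IV) requires a careful split into ``constant'' and ``$\beta$-weighted'' parts. Beyond this, the identity is a direct algebraic consequence of facts already established in the preceding lemmas.
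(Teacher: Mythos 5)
Your proof is correct, and it takes a genuinely different route from the paper. The paper's proof proceeds by writing each of the four $\delta$-dependent logarithms $\ln(1-e^{-\beta\delta})$, $\ln(1-e^{-\beta(x-\delta)})$, $\ln(1-e^{-\beta(y-\delta)})$, $\ln(1-e^{-\beta(1-x-y+\delta)})$ explicitly in terms of the $\delta$-free ones (four factorization identities that follow from the defining formula \eqref{eq:mean}) and then substituting all four into the left-hand side of \eqref{eq:c-beta-xy} and grinding through the resulting cancellation. You instead generate the auxiliary identity (IV) for free by differentiating $\beta\, a_{\beta}(x,y;h_{\beta}(x,y))\equiv 0$ in $\beta$, with the $\partial_{\delta}$-term killed by the criticality \eqref{eq:zero-first-order} (an envelope-theorem argument; this is legitimate since $h_\beta$ is given by the explicit smooth formula \eqref{eq:mean}, and Lemma \ref{lem:rate function} precedes this one, so there is no circularity). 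You then only need two of the four factorization identities, namely $(1-e^{-\beta\delta})(1-e^{-\beta})=(1-e^{-\beta x})(1-e^{-\beta y})$ and its dual, plus the vanishing of \eqref{eq:first-order-derivative} at $\delta=h_\beta$, to collapse $Z+\tfrac{\beta}{2}(\mathrm{IV})$ to zero. I checked your coefficient collection: the $f_2,f_3$ coefficients reduce to $-\tfrac{\beta\delta}{2}$, the $f_1,f_4$ coefficients to $-\tfrac12+\tfrac{\beta\delta}{2}$, each $g_j$ ($j\ge1$) to $\tfrac12$ and $g_0$ to $-1$, and the two cancellations you describe do close the computation. What your approach buys is conceptual economy — the identity is exhibited as the $\beta$-derivative of the already-known vanishing of the rate function at its minimizer, which explains \emph{why} such an identity should hold — at the cost of invoking more of the surrounding machinery; the paper's computation is self-contained but offers no such explanation.
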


\begin{proof}
For $\delta=h_{\beta}(x, y)$, one can check that the following identities hold:
\begin{align*}
& \ln(1- e^{-\beta\delta })=\ln(1- e^{-x \beta})+\ln(1- e^{-y \beta})-\ln(1- e^{-\beta}),\\
& \ln(1- e^{-(x-\delta) \beta})=\ln(1- e^{-x \beta})+\ln(1- e^{-(1-y) \beta})-\beta y-\ln(e^{-\beta x}+e^{-\beta y}-e^{-\beta(x+y)}-e^{-\beta}),\\
& \ln(1- e^{-(y-\delta) \beta})=\ln(1- e^{-(1-x) \beta})+\ln(1- e^{-y \beta})-\beta x-\ln(e^{-\beta x}+e^{-\beta y}-e^{-\beta(x+y)}-e^{-\beta}),\\
& \ln(1- e^{-(1-x-y+\delta) \beta})=\ln(1- e^{-(1-x) \beta})+\ln(1- e^{-(1-y) \beta})-\ln(1- e^{-\beta}).
\end{align*}
Then \eqref{eq:c-beta-xy} follows from the above identities by a direct calculation.
\end{proof}

\begin{proof}[Proof of Theorem \ref{thm:LLT}]
By the exact formula \eqref{eq:Height-Function}, we have:
\begin{multline*}
\mathcal{M}_{N}^{q} \left( H_{x N, y N}(\omega)=h_{\beta}(x, y) \cdot N  +  \alpha \sigma_{N, \beta}(x,y) \right)\\
=\exp \bigg( \left[(x-h_{\beta}(x, y))N- \alpha \sigma_{N, \beta}(x,y)\right] \left[(x-h_{\beta}(x, y))N- \alpha \sigma_{N, \beta}(x,y)\right] \cdot \ln q  \bigg.\\
\bigg. + \ln (q^{h_{\beta}(x, y)N+ \alpha \sigma_{N, \beta}(x,y)+1} ; q)_{\infty} + \ln (q^{(x-h_{\beta}(x, y))N- \alpha \sigma_{N, \beta}(x,y)+1} ; q)_{\infty} + \ln (q^{N+1} ; q)_{\infty} \bigg.\\
\bigg.+ \ln (q^{(y-h_{\beta}(x, y))N- \alpha \sigma_{N, \beta}(x,y)+1} ; q)_{\infty}+ \ln (q^{(1-x-y+h_{\beta}(x, y))N+ \alpha \sigma_{N, \beta}(x,y)+1} ; q)_{\infty} \bigg.\\
\bigg. -\ln (q^{xN+1} ; q)_{\infty}- \ln (q^{yN+1} ; q)_{\infty} - \ln (q^{(1-x)N+1} ; q)_{\infty} - \ln (q^{(1-y)N+1} ; q)_{\infty} - \ln (q ; q)_{\infty}\bigg)
\end{multline*}
Under the scaling \eqref{eq:scale}, we get\footnote{More details on this step can be found in the proof of Lemma \ref{lem:K-gamma} below.} the following result with the use of asymptotics \eqref{eq:lnqq}, \eqref{eq:lnqsqrtNq} and \eqref{eq:lnqNq}:
\begin{multline*}
\mathcal{M}_{N}^{q} \left( H_{x N, y N}(\omega)=h_{\beta}(x, y) \cdot N +  \alpha \sigma_{N, \beta}(x,y) \right)\\
=\frac{1}{\sigma_{N, \beta}(x,y) \sqrt{2\pi }}\exp\bigg(a_{\beta}(x,y)N + b_{\beta}(x,y)\sigma_{N, \beta}(x,y) \cdot \alpha - \frac{f_{\beta}(x,y)}{2\exp(2d_{\beta}(x,y))} \cdot \alpha^{2}  \bigg.\\
\bigg. -\frac{\beta}{2}a_{\beta}(x,y)+\frac{\beta h_{\beta}(x, y)}{2}b_{\beta}(x,y)+c_{\beta}(x,y)+O\left(\frac{\alpha^3}{\sqrt{N}}\right)+O\left(\frac{\alpha}{\sqrt{N}}\right)+O\left(\frac{\alpha^4}{N}\right)\bigg),
\end{multline*}
where the function $a_{\beta}(x,y)$ has the following expression:
\begin{multline*}
a_{\beta}(x,y)=\beta^{-1}\bigg(\frac{\pi^2}{6}-\beta^2(x-h_{\beta}(x, y))(y-h_{\beta}(x, y))-Li_{2}(e^{-\beta h_{\beta}(x, y)})\bigg.\\
\bigg.-Li_{2}(e^{- \beta(x-h_{\beta}(x, y))})-Li_{2}(e^{- \beta(y-h_{\beta}(x, y))})-Li_{2}(e^{- \beta(1-x-y+h_{\beta}(x, y))})\bigg.\\
\bigg.-Li_{2}(e^{-\beta})+Li_{2}(e^{-\beta x })+Li_{2}(e^{- \beta y })+Li_{2}(e^{-\beta(1-x) })+Li_{2}(e^{-\beta(1-y)})\bigg),
\end{multline*}
and the functions $b_{\beta}(x,y)$, $c_{\beta}(x,y)$ and $f_{\beta}(x,y)$ are given by:
\begin{multline*}
b_{\beta}(x,y)=\beta(x+y-2h_{\beta}(x, y))+\ln(1- e^{- \beta(x-h_{\beta}(x, y))})+\ln(1- e^{-\beta(y-h_{\beta}(x, y)) })\\
-\ln(1- e^{- \beta h_{\beta}(x, y)})-\ln(1- e^{- \beta(1-x-y+h_{\beta}(x, y))}),
\end{multline*}
\begin{multline*}
c_{\beta}(x,y)=\frac{\beta^2}{2}(x+y)h_{\beta}(x, y)-\beta^2xy-\frac{1}{2}\ln(1- e^{-\beta h_{\beta}(x, y)})-\frac{\beta x }{2}\ln(1- e^{- \beta(x-h_{\beta}(x, y))})\\
-\frac{ \beta y}{2}\ln(1- e^{- \beta(y-h_{\beta}(x, y))})-\frac{1+ \beta(1-x-y)}{2}\ln(1- e^{-\beta(1-x-y+h_{\beta}(x, y)) })\\
+\frac{1+\beta x }{2}\ln(1- e^{- \beta x })+\frac{1+\beta y}{2}\ln(1- e^{-\beta y })-\left(1+\frac{\beta}{2}\right)\ln(1- e^{-\beta})\\
+\frac{1+\beta(1-x)}{2}\ln(1- e^{-\beta(1-x)})+\frac{1+\beta(1-y)}{2}\ln(1- e^{-\beta(1-y)}),
\end{multline*}
\begin{align*}
f_{\beta}(x,y)=2+\frac{e^{-\beta h_{\beta}(x, y)}}{1-e^{-\beta h_{\beta}(x, y)}}+\frac{e^{-(x-h_{\beta}(x, y)) \beta}}{1-e^{-(x-h_{\beta}(x, y)) \beta}}+\frac{e^{-(y-h_{\beta}(x, y)) \beta}}{1-e^{-(y-h_{\beta}(x, y)) \beta}}+\frac{e^{-(1-x-y+h_{\beta}(x, y)) \beta}}{1-e^{-(1-x-y+h_{\beta}(x, y)) \beta}}.
\end{align*}
Combining the results \eqref{eq:Li2}, \eqref{eq:first-order-derivative}, \eqref{eq:second-order-derivative}, \eqref{eq:d-beta-xy} and \eqref{eq:c-beta-xy}, we have:
\begin{multline*}
\mathcal{M}_{N}^{q} \left( H_{x N, y N}(\omega)=h_{\beta}(x, y) \cdot N +  \alpha \sigma_{N, \beta}(x,y) \right)\\
=\frac{1}{\sigma_{N, \beta}(x,y) \sqrt{2\pi }}\exp\left( - \frac{\alpha^{2}}{2}\right)\left(1+O\left(\frac{\alpha^3}{\sqrt{N}}\right)\right)\left(1+O\left(\frac{\alpha}{\sqrt{N}}\right)\right)\left(1+O\left(\frac{\alpha^4}{N}\right)\right)\\
=\frac{1}{\sigma_{N, \beta}(x,y) \sqrt{2\pi }}\exp\left( - \frac{\alpha^{2}}{2}\right)\left(1+O\left(\frac{\alpha^3}{\sqrt{N}}\right)+O\left(\frac{\alpha}{\sqrt{N}}\right)+O\left(\frac{\alpha^4}{N}\right)\right)\\
\end{multline*}
Letting $k=\lfloor h_{\beta}(x, y) \cdot N +  \alpha \sigma_{N, \beta}(x,y) \rfloor$ such that $|k-h_{\beta}(x, y) N| < \sqrt{N} \cdot A_N$, we get \eqref{eq:LLT} from the above formula by picking the largest error bounds.
\end{proof}

A corollary of Theorem \ref{thm:LLT} is the standard central limit theorem (global central limit theorem) for the height function \eqref{def:Height-Function}.
\begin{corollary}
\label{eq:CLT-global}
Fix a parameter $\beta \in \mathbb{R}_{>0}$, and let $\omega$ be the random permutation distributed according to the Mallows measure $\mathcal{M}_{N}^{q}$, $N \in \mathbb{N}$. Then for any $0 < x, y <1$, $-\infty <a<b<+\infty$, the following convergence holds:
\begin{align}
\label{eq:CLT}
\lim_{N\to\infty}\mathcal{M}_{N}^{q}\left(a < \frac{H_{xN,yN}(\omega)-h_{\beta}(x, y) N}{\sigma_{N, \beta}(x,y)}<b\right)=\int_{a}^{b}\mathcal{N}(\alpha)d\alpha.
\end{align}
\end{corollary}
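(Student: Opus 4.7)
The plan is to derive the global central limit theorem directly from the local version, Theorem~\ref{thm:LLT}, via a standard Riemann sum argument. The key observation is that the interval of integer values of $H_{xN,yN}(\omega)$ that contribute to the event $\{a < (H_{xN,yN}(\omega) - h_\beta(x,y)N)/\sigma_{N,\beta}(x,y) < b\}$ has length $(b-a)\sigma_{N,\beta}(x,y) = O(\sqrt{N})$, while the Gaussian approximation of Theorem~\ref{thm:LLT} holds uniformly on a much wider window.

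First I would write the probability as a sum of point probabilities:
\begin{equation*}
\mathcal{M}_{N}^{q}\left(a < \frac{H_{xN,yN}(\omega)-h_{\beta}(x, y) N}{\sigma_{N, \beta}(x,y)}<b\right) = \sum_{k \in I_N} \mathcal{M}_{N}^{q}\bigl(H_{xN,yN}(\omega)=k\bigr),
\end{equation*}
where $I_N := \{k \in \mathbb{Z} : h_\beta(x,y)N + a\sigma_{N,\beta}(x,y) < k < h_\beta(x,y)N + b\sigma_{N,\beta}(x,y)\}$. Next I would fix any constant $C > \max(|a|,|b|)$ and apply Theorem~\ref{thm:LLT} with the constant sequence $A_N \equiv C$. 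This sequence trivially satisfies $A_N/N^{1/6} \to 0$, and for every $k \in I_N$ the bound $|k - h_\beta(x,y) N| < C \sqrt{N}$ holds once $N$ is large, so the local CLT yields the uniform estimate
\begin{equation*}
\mathcal{M}_{N}^{q}\bigl(H_{xN,yN}(\omega)=k\bigr) = \frac{1}{\sigma_{N,\beta}(x,y)}\,\mathcal{N}\!\left(\frac{k-h_\beta(x,y)N}{\sigma_{N,\beta}(x,y)}\right)(1+\varepsilon_N),
\end{equation*}
with $\varepsilon_N = O(C^3/\sqrt{N}) = o(1)$ independent of $k \in I_N$.

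Substituting this back into the sum gives
\begin{equation*}
\mathcal{M}_{N}^{q}\left(a < \frac{H_{xN,yN}(\omega)-h_\beta(x,y) N}{\sigma_{N,\beta}(x,y)}<b\right) = (1+o(1))\sum_{k \in I_N}\frac{1}{\sigma_{N,\beta}(x,y)}\,\mathcal{N}\!\left(\frac{k-h_\beta(x,y)N}{\sigma_{N,\beta}(x,y)}\right).
\end{equation*}
The right-hand sum is a Riemann sum for $\int_a^b \mathcal{N}(\alpha)\,d\alpha$ with uniform mesh $1/\sigma_{N,\beta}(x,y) = O(1/\sqrt{N}) \to 0$, and the evaluation points $(k - h_\beta(x,y)N)/\sigma_{N,\beta}(x,y)$ densely fill $(a,b)$ as $N \to \infty$. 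Since $\mathcal{N}$ is bounded and continuous on the compact interval $[a,b]$, this Riemann sum converges to $\int_a^b \mathcal{N}(\alpha)\,d\alpha$, which completes the proof.

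There is no serious obstacle here: the only substantive input is the uniformity of the error in Theorem~\ref{thm:LLT}, which is already encoded in the $\max$ over $k$ appearing in its statement. Once that uniformity is invoked, the remainder of the argument reduces to the standard convergence of Riemann sums of a continuous function.
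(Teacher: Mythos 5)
Your proposal is correct and follows essentially the same route as the paper: both decompose the probability into point masses over the $O(\sqrt{N})$-length window, invoke the uniformity of Theorem~\ref{thm:LLT} with a constant sequence $A_N$ to replace each point mass by the Gaussian density value up to a uniform $o(1)$ relative error, and then pass to the limit via convergence of the Riemann sum of $\mathcal{N}$ over $[a,b]$. The only cosmetic difference is that the paper organizes the two error sources via the triangle inequality in additive form, whereas you use the multiplicative $(1+o(1))$ factor; the content is identical.
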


\begin{proof}
By the triangle inequality,
\begin{align*}
&\left|\mathcal{M}_{N}^{q} \left(a < \frac{H_{xN,yN}(\omega)-h_{\beta}(x, y) N}{\sigma_{N, \beta}(x,y)}<b\right)-\int_{a}^{b}\mathcal{N}(\alpha)d\alpha \right|\\
=&\left| \sum_{k=\lceil a \sigma_{N, \beta}(x,y) + h_{\beta}(x, y) N \rceil}^{\lceil b \sigma_{N, \beta}(x,y) + h_{\beta}(x, y) N-1\rceil} \mathcal{M}_{N}^{q} \left(H_{xN,yN}(\omega)=k\right)-\int_{a}^{b}\mathcal{N}(\alpha)d\alpha \right|\\
\leq & \left| \sum_{k=\lceil a \sigma_{N, \beta}(x,y) + h_{\beta}(x, y) N \rceil}^{\lceil b \sigma_{N, \beta}(x,y) + h_{\beta}(x, y) N-1\rceil} \left(\mathcal{M}_{N}^{q} \left(H_{xN,yN}(\omega)=k\right)-\frac{1}{\sigma_{N, \beta}(x,y)}\mathcal{N}\left(\frac{k-h_{\beta}(x, y) N}{\sigma_{N, \beta}(x,y)}\right)\right) \right|\\
&+\left| \sum_{k=\lceil a \sigma_{N, \beta}(x,y) + h_{\beta}(x, y) N \rceil}^{\lceil b \sigma_{N, \beta}(x,y) + h_{\beta}(x, y) N-1\rceil}\frac{1}{\sigma_{N, \beta}(x,y)}\mathcal{N}\left(\frac{k-h_{\beta}(x, y) N}{\sigma_{N, \beta}(x,y)}\right)- \int_{a}^{b}\mathcal{N}(\alpha)d\alpha\right|.
\end{align*}
The second term in the right-hand side of the above inequality is the difference between the integral of the function $\mathcal{N}$ and the Riemann sum of this integral, therefore this term goes to zero. For the first term, let $c>\max\left\{\frac{a}{\sqrt{\beta}\exp(d_{\beta}(x,y))}, \frac{b}{\sqrt{\beta}\exp(d_{\beta}(x,y))}\right\}$ and we rewrite the result \eqref{eq:LLT} in the local limit theorem:
\begin{align*}
&\max_{|k-h_{\beta}(x, y) N|<c\sqrt{N}}\left|\mathcal{M}_{N}^{q} \left(H_{xN,yN}(\omega)=k\right)-\frac{1}{\sigma_{N, \beta}(x,y)}\mathcal{N}\left(\frac{k-h_{\beta}(x, y) N}{\sigma_{N, \beta}(x,y)}\right)\right|\\
=&\max_{|k-h_{\beta}(x, y) N|<c\sqrt{N}}\left| \frac{\sigma_{N, \beta}(x,y)\mathcal{M}_{N}^{q} \left( H_{x N, y N}(\omega)= k \right)}{\mathcal{N}\left(\frac{k-h_{\beta}(x, y) N}{\sigma_{N, \beta}(x,y)}\right)} - 1\right| \cdot \frac{1}{\sigma_{N, \beta}(x,y)} \cdot \mathcal{N}\left(\frac{k-h_{\beta}(x, y) N}{\sigma_{N, \beta}(x,y)}\right) \\
& \leq O\left(\frac{1}{\sqrt{N}}\right) \cdot \frac{1}{\sigma_{N, \beta}(x,y) \sqrt{2\pi}}=O\left(\frac{1}{N}\right),
\end{align*}
for all $k$'s in the summation from $\lceil a \sigma_{N, \beta}(x,y) + h_{\beta}(x, y) N \rceil$ to $\lceil b \sigma_{N, \beta}(x,y) + h_{\beta}(x, y) N-1\rceil$. The summation has only $O\left(\sqrt{N}\right)$ terms, thus the sum of the error bounds is of the order of $O \left(\frac{1}{\sqrt{N}}\right) = o(1)$.
\end{proof}

\subsection{Local central limit theorem: Several points}

In this section we prove a multi-dimensional local central limit theorem for height functions of the Mallows measure. First, we need a generalization of a result from the previous section.
\begin{lemma}
\label{lem:K-gamma}
Let $\alpha,\gamma \in \R$, let $\beta \in \R_{>0}$, let $x,y \in (0;1)$, let $q=1-\beta/N$, and let $w$ be the random permutation of size $N$ distributed according to the Mallows measure. One has
\begin{multline}
\label{eq:K-gamma}
\lim_{N \to \infty}  \frac{\sqrt{2 \pi N}}{ \sigma_{\beta}(x,y) } \mathcal{M}_{N}^{q} \left( H_{x N, y N+\gamma \sqrt{N}}(\omega)=\delta N + \alpha \sqrt{N}\right)
=\exp \left( -\frac{ \sigma_{\beta}^2(x,y) }{2} \left(\alpha-\mu_{\beta}(x,y;\gamma)\right)^2 \right). 
\end{multline}
with $\mu_{\beta}(x,y;\gamma):=\frac{\sqrt{\beta v_{\beta}(x,y)}}{\sigma_{\beta}(x,y)}\gamma$, where 
\begin{align}
v_{\beta}(x,y):=\frac{e^{-\beta y}(1-e^{-\beta})(1-e^{-\beta x})}{e^{-\beta x}(1-e^{-\beta y})(1-e^{-\beta (1-x)})(1-e^{-\beta (1-y)})}.
\end{align}
\end{lemma}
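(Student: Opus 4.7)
The plan is to repeat the proof of Theorem \ref{thm:LLT} verbatim, but with the additional $\gamma\sqrt{N}$ shift placed inside the $K$-parameter from the start. Concretely, I would substitute $L=\lfloor xN\rfloor$, $K=\lfloor yN+\gamma\sqrt{N}\rfloor$, $s=\lfloor h_{\beta}(x,y)N+\alpha\sqrt{N}\rfloor$ into the exact formula \eqref{eq:Height-Function}. Then I would take logs and apply the asymptotic expansion \eqref{eq:lnqsqrtNq} to every $q$-Pochhammer factor; those with zero $\sqrt{N}$-shift fall under \eqref{eq:lnqNq}, while $(q;q)_\infty$ uses \eqref{eq:lnqq}. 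The $q$-Pochhammer factors have $\sqrt{N}$-shifts $0,0,0,\gamma,-\gamma,\alpha,\gamma-\alpha,-\alpha,\alpha-\gamma$, and the prefactor $q^{(K-s)(L-s)}$ contributes terms of order $N$, $\sqrt{N}$, and $1$ upon using $\ln q=-\beta/N-\beta^2/(2N^2)+O(N^{-3})$.

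After collecting the asymptotic expansion by powers of $N$: (i) the $N$-order term equals $-a_{\beta}(x,y;h_{\beta}(x,y))=0$ by Lemma \ref{lem:rate function}; (ii) the $\sqrt{N}$-order term is linear in $\alpha$ and $\gamma$, and its $\alpha$-coefficient vanishes by the first-order condition \eqref{eq:zero-first-order}, leaving only a $\gamma$-linear contribution; (iii) the $O(1)$ quadratic form in $(\alpha,\gamma)$ has $\alpha^{2}$-coefficient equal to $-\tfrac{1}{2}\beta\exp(2d_{\beta}(x,y))=-\tfrac{1}{2}\sigma_{\beta}^{2}(x,y)$ by \eqref{eq:zero-second-order}/\eqref{eq:d-beta-xy}; (iv) the $\alpha$-independent $O(1)$ constant equals $-\tfrac{1}{2}\ln\!\bigl(2\pi N/\sigma_{\beta}^{2}(x,y)\bigr)$ by the same identity Lemma \ref{lem:lndelta} used in the proof of Theorem \ref{thm:LLT}; (v) error terms are controlled as in \eqref{eq:lnqsqrtNq}.

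The final step is to complete the square. Denote the $\gamma$-linear $\sqrt{N}$-coefficient by $C(x,y)\gamma$ and the $\alpha\gamma$ and $\gamma^{2}$ coefficients by $B_{1}(x,y)\alpha\gamma$ and $B_{2}(x,y)\gamma^{2}$. A direct check, using the four identities in the proof of Lemma \ref{lem:lndelta} expressing $1-e^{-\beta(x-h_{\beta})}$, $1-e^{-\beta(y-h_{\beta})}$, etc.\ in closed form, will show $C(x,y)/\sigma_{\beta}^{2}(x,y)=\partial_{y}h_{\beta}(x,y)$ and that the remaining quadratic and linear-in-$\alpha$ pieces match, so that the exponent assembles into $-\tfrac{1}{2}\sigma_{\beta}^{2}(x,y)\bigl(\alpha-\partial_{y}h_{\beta}(x,y)\gamma\bigr)^{2}$. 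Since $\partial_{y}h_{\beta}(x,y)=\tfrac{e^{-\beta y}(1-e^{-\beta x})}{e^{-\beta x}+e^{-\beta y}-e^{-\beta(x+y)}-e^{-\beta}}$, the same algebra (which amounts to computing $(\partial_{y}h_{\beta})^{2}$ and comparing with $v_{\beta}(x,y)/\exp(2d_{\beta}(x,y))$ via the Lemma \ref{lem:lndelta} identities) verifies $\partial_{y}h_{\beta}(x,y)=\sqrt{\beta v_{\beta}(x,y)}/\sigma_{\beta}(x,y)=\mu_{\beta}(x,y;\gamma)/\gamma$, completing the proof.

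The main obstacle is purely algebraic: correctly extracting the $\gamma$-linear $\sqrt{N}$-coefficient and the mixed quadratic coefficients from nine $q$-Pochhammer factors (where $\gamma$ enters with four different sign patterns) and then verifying that they repackage as a completed square with the right mean $\mu_{\beta}(x,y;\gamma)$. The simplifications all ultimately reduce to the four closed-form expressions for $1-e^{-\beta(\bullet-h_{\beta})}$ appearing in the proof of Lemma \ref{lem:lndelta}, which is why no new dilogarithm identity beyond Lemma \ref{lem:Li2} should be needed.
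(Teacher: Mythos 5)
Your overall strategy --- substitute $L=xN$, $K=yN+\gamma\sqrt N$, $s=h_\beta(x,y)N+\alpha\sqrt N$ into the exact formula \eqref{eq:Height-Function}, expand each $q$-Pochhammer factor via \eqref{eq:lnqsqrtNq}, and simplify with the identities from Lemma \ref{lem:lndelta} --- is exactly the paper's route, and your closing identification $\mu_\beta(x,y;\gamma)/\gamma=\partial_y h_\beta(x,y)=\sqrt{\beta v_\beta(x,y)}/\sigma_\beta(x,y)$ is correct. But your treatment of the $\sqrt N$-order term has a genuine gap. You assert that after the $\alpha$-coefficient vanishes there survives a ``$\gamma$-linear contribution'' $C(x,y)\gamma$ at order $\sqrt N$, and that one should then prove $C(x,y)/\sigma_\beta^2(x,y)=\partial_y h_\beta(x,y)$ and feed this into the completed square. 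That cannot work: a nonzero coefficient of $\gamma\sqrt N$ in the exponent would force the probability to behave like $e^{C\gamma\sqrt N}$, and no $N$-independent completed square can absorb such a term; your own claimed final exponent contains no $\sqrt N$. What actually has to be shown --- and what your sketch never establishes --- is that $C(x,y)=0$, i.e.\ that at $\delta=h_\beta(x,y)$
\[
\beta(x-\delta)+\ln(1-e^{-(y-\delta)\beta})-\ln(1-e^{-\beta y})+\ln(1-e^{-(1-y)\beta})-\ln(1-e^{-(1-x-y+\delta)\beta})=0,
\]
which follows from the closed-form expressions in the proof of Lemma \ref{lem:lndelta} (the left-hand side reduces to $\beta(h_\beta(x,y)-\delta)$). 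This cancellation is the heart of the lemma: it says that $h_\beta(x,y)N$ remains the correct centring up to $O(\sqrt N)$ despite the $\gamma\sqrt N$ shift in $K$, and it is a separate identity from \eqref{eq:zero-first-order}.

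The mean shift $\mu_\beta$ therefore does not come from the $\sqrt N$-order term at all; it comes from the order-one cross term in $\alpha\gamma$, produced by the quadratic pieces of \eqref{eq:lnqsqrtNq} applied to the two factors whose shifts are $\pm(\gamma-\alpha)$ together with the prefactor $q^{(K-s)(L-s)}$. One must compute this coefficient in closed form (it equals $\beta u_\beta(x,y)$ with $u_\beta=\tfrac{1-e^{-(1-x)\beta}}{(1-e^{-(y-h_\beta)\beta})(1-e^{-(1-x-y+h_\beta)\beta})}$), compute the $\gamma^2$ coefficient $-\tfrac{\beta}{2}v_\beta(x,y)$, and verify the compatibility $\beta u_\beta^2=v_\beta\sigma_\beta^2$ so that the quadratic form in $(\alpha,\gamma)$ assembles into $-\tfrac{\sigma_\beta^2}{2}(\alpha-\mu_\beta)^2$ with no leftover constant. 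All of these are identities of the type you describe and the proposal is repairable, but as written it misattributes the source of $\mu_\beta$ and omits the one cancellation without which the stated limit would be degenerate.
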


\begin{proof}
We start from the exact formula \eqref{eq:Height-Function}:
\begin{multline}
\label{eq:formula}
\mathcal{M}_{N}^{q} \left( H_{L,K}(\omega)=s \right)
\\
=\exp\big(\ln q^{(L-s)(K-s)} + \ln (q^{s+1} ; q)_{\infty} + \ln (q^{L-s+1} ; q)_{\infty} + \ln (q^{K-s+1} ; q)_{\infty} + \ln (q^{N+s+1-K-L} ; q)_{\infty} \big.\\
\big. + \ln (q^{N+1} ; q)_{\infty}-\ln (q^{K+1} ; q)_{\infty} - \ln (q^{L+1} ; q)_{\infty} - \ln (q^{N-K+1} ; q)_{\infty} - \ln (q^{N-L+1} ; q)_{\infty} - \ln (q ; q)_{\infty}\big)
\end{multline}
Under the scaling
\begin{align}
\label{eq:scale2}
L=x N,\quad K=y N+\gamma \sqrt{N},\quad q=1-\frac{\beta}{N},\quad s=\delta N+\alpha \sqrt{N}, \quad N \to \infty,
\end{align}
we write down all asymptotic results required in \eqref{eq:formula} with the use of \eqref{eq:lnqq}, \eqref{eq:lnqsqrtNq}, and \eqref{eq:lnqNq}:
\begin{multline*}
\ln q^{(L-s)(K-s)}=(xN-\delta N-\alpha \sqrt{N})(yN+\gamma \sqrt{N}-\delta N-\alpha \sqrt{N}) \cdot \ln \left( 1-\frac{\beta}{N} \right)\\
=-[(x-\delta)(y-\delta)N^2+\gamma(x-\delta)N^{\frac32}-\alpha(x+y-2\delta)N^{\frac32}-\gamma\alpha N+\alpha^2 N] \left( \frac{\beta}{N}+\frac{\beta^2}{2N^2}+O(N^{-3} ) \right)\\
=-\beta(x-\delta)(y-\delta)N+\alpha\beta(x+y-2\delta)N^{\frac12}-\beta\gamma(x-\delta)N^{\frac12}-\frac{\beta^2}{2}(x-\delta)(y-\delta)-\alpha^2\beta+\gamma\alpha\beta+O(N^{-\frac12}).
\end{multline*}
\begin{multline*}
 \ln (q^{s+1} ; q)_{\infty}=-\frac{Li_{2}(e^{-\beta\delta})}{\beta} \cdot N -\alpha\ln(1- e^{-\delta \beta})\cdot \sqrt{N}-\alpha^2 \cdot \frac{\beta e^{-\delta\beta}}{2(1-e^{-\delta\beta})}\\
+\frac12Li_{2}(e^{-\beta\delta})-\frac{1+\delta \beta}{2}\ln(1- e^{-\delta \beta})+O\left(\frac{\alpha^3}{\sqrt{N}}\right)+O\left(\frac{\alpha^4}{N}\right).
\end{multline*}
\begin{multline*}
 \ln (q^{L-s+1} ; q)_{\infty}=-\frac{Li_{2}(e^{-\beta(x-\delta)})}{\beta} \cdot N +\alpha\ln(1- e^{-\beta(x-\delta)})\cdot \sqrt{N}-\alpha^2 \cdot \frac{\beta e^{-\beta(x-\delta)}}{2(1-e^{-\beta(x-\delta)})}\\
+\frac12Li_{2}(e^{-\beta(x-\delta)})-\frac{1+\beta(x-\delta)}{2}\ln(1- e^{-\beta(x-\delta)})+O\left(\frac{\alpha^3}{\sqrt{N}}\right)+O\left(\frac{\alpha^4}{N}\right).
\end{multline*}
\begin{multline*}
 \ln (q^{K-s+1} ; q)_{\infty}=-\frac{Li_{2}(e^{-\beta(y-\delta)})}{\beta} \cdot N -(\gamma-\alpha)\ln(1- e^{-\beta(y-\delta)})\cdot \sqrt{N}-(\gamma-\alpha)^2 \cdot \frac{\beta e^{-\beta(y-\delta)}}{2(1-e^{-\beta(y-\delta)})}\\
+\frac12Li_{2}(e^{-\beta(y-\delta)})-\frac{1+\beta(y-\delta)}{2}\ln(1- e^{-\beta(y-\delta)})+O\left(\frac{(\gamma-\alpha)^3}{\sqrt{N}}\right)+O\left(\frac{(\gamma-\alpha)^4}{N}\right).
\end{multline*}
\begin{multline*}
 \ln (q^{N+s+1-K-L} ; q)_{\infty}=-\frac{Li_{2}(e^{-\beta(1+\delta-x-y)})}{\beta} \cdot N -(\alpha-\gamma)\ln(1- e^{-\beta(1+\delta-x-y)})\cdot \sqrt{N} 
 \\ -(\alpha-\gamma)^2 \cdot \frac{\beta e^{-\beta(1+\delta-x-y)}}{2(1-e^{-\beta(1+\delta-x-y)})}
+\frac12Li_{2}(e^{-\beta(1+\delta-x-y)})
\\ -\frac{1+\beta(1+\delta-x-y)}{2}\ln(1- e^{-\beta(1+\delta-x-y)})+O\left(\frac{(\alpha-\gamma)^3}{\sqrt{N}}\right)+O\left(\frac{(\alpha-\gamma)^4}{N}\right).
\end{multline*}
\begin{align*}
 \ln (q^{N+1} ; q)_{\infty}=-\frac{Li_{2}(e^{-\beta})}{\beta} \cdot N+\frac12Li_{2}(e^{-\beta})-\frac{1+\beta}{2}\ln(1- e^{-\beta})+O(N^{-1}).
\end{align*}
\begin{multline*}
 \ln (q^{K+1} ; q)_{\infty}=-\frac{Li_{2}(e^{-\beta y})}{\beta} \cdot N -\gamma \ln(1- e^{- \beta y})\cdot \sqrt{N}-\gamma^2 \cdot \frac{\beta e^{- \beta y}}{2(1-e^{- \beta y})}\\
+\frac12Li_{2}(e^{-\beta y})-\frac{1+ \beta y}{2}\ln(1- e^{- \beta y})+O\left(\frac{\alpha^3}{\sqrt{N}}\right)+O\left(\frac{\alpha^4}{N}\right).
\end{multline*}
\begin{align*}
 \ln (q^{L+1} ; q)_{\infty}=-\frac{Li_{2}(e^{-\beta x})}{\beta} \cdot N+\frac12Li_{2}(e^{-\beta x})-\frac{1+\beta x }{2}\ln(1- e^{-\beta x })+O(N^{-1}).
\end{align*}
\begin{multline*}
 \ln (q^{N-K+1} ; q)_{\infty}=-\frac{Li_{2}(e^{-\beta(1-y)})}{\beta} \cdot N +\gamma\ln(1- e^{-\beta(1-y)})\cdot \sqrt{N}-\gamma^2 \cdot \frac{\beta e^{- \beta(1-y)}}{2(1-e^{- \beta(1-y)})}\\
+\frac12Li_{2}(e^{-\beta(1-y)})-\frac{1+\beta(1-y)}{2}\ln(1- e^{-\beta(1-y)})+O\left(\frac{\gamma^3}{\sqrt{N}}\right)+O\left(\frac{\gamma^4}{N}\right).
\end{multline*}
\begin{align*}
 \ln (q^{N-L+1} ; q)_{\infty}=-\frac{Li_{2}(e^{-\beta (1-x)})}{\beta} \cdot N+\frac12Li_{2}(e^{-\beta (1-x)})-\frac{1+\beta (1-x) }{2}\ln(1- e^{-\beta (1-x) })+O(N^{-1}).
\end{align*}
\begin{align*}
\ln (q ; q)_{\infty}=-\frac{\pi^2}{6\beta}N+\frac{\pi^2}{12}-\frac12\ln\frac{\beta}{2\pi N}+O\left( N^{-1} \right).
\end{align*}
Using the above asymptotic expansions, we can get the following result:
\begin{multline}
\mathcal{M}_{N}^{q} \left( H_{x N, y N}(\omega)=\delta N + \alpha \sqrt{N}\right)\\
=\frac{\sqrt{\beta}}{\sqrt{2\pi N}}\exp\big(a(x,y,\beta,\delta)N+b(x,y,\beta,\delta,\alpha)\sqrt{N}+c(x,y,\beta,\delta, \alpha)+O\left(\frac{\alpha^3}{\sqrt{N}}\right)\big),
\end{multline}
where we have 
\begin{multline*}
a(x,y,\beta,\delta)=\beta^{-1}\bigg(\frac{\pi^2}{6}-\beta^2(x-\delta)(y-\delta)-Li_{2}(e^{-\beta\delta })-Li_{2}(e^{- \beta(x-\delta)})-Li_{2}(e^{- \beta(y-\delta)})\bigg.\\
\bigg.-Li_{2}(e^{- \beta(1-x-y+\delta)})-Li_{2}(e^{-\beta})+Li_{2}(e^{-\beta x })+Li_{2}(e^{- \beta y })+Li_{2}(e^{-\beta(1-x) })+Li_{2}(e^{-\beta(1-y)})\bigg),
\end{multline*}
\begin{multline*}
b(x,y,\beta,\delta,\alpha,\gamma)=\alpha \beta(x+y-2\delta)-\alpha\ln(1- e^{-\delta \beta})+\alpha\ln(1- e^{-(x-\delta) \beta})+\alpha\ln(1- e^{-(y-\delta) \beta})\\
-\alpha\ln(1- e^{-(1-x-y+\delta) \beta}) -\gamma \beta(x-\delta) -\gamma \ln(1-e^{-(y-\delta) \beta})\\
+\gamma \ln(1- e^{-\beta y}) -\gamma\ln(1-e^{-(1-y) \beta})+\gamma \ln(1- e^{-(1-x-y+\delta)\beta}),
\end{multline*}
\begin{multline*}
c(x,y,\beta,\delta,\alpha,\gamma)=\beta\gamma\alpha\left(1+ \frac{e^{-(y-\delta) \beta}}{1-e^{-(y-\delta) \beta}}+ \frac{e^{-(1-x-y+\delta) \beta}}{1-e^{-(1-x-y+\delta) \beta}}\right)\\
-\frac{\gamma^2\beta}{2}\left(\frac{e^{-(y-\delta) \beta}}{1-e^{-(y-\delta) \beta}}+\frac{e^{-(1-x-y+\delta) \beta}}{1-e^{-(1-x-y+\delta) \beta}}-\frac{e^{-\beta y}}{1-e^{-\beta y}}-\frac{e^{-(1-y) \beta}}{1-e^{-(1-y) \beta}}\right)\\
-\frac{\alpha^2\beta}{2}\left(2+\frac{e^{-\delta\beta}}{1-e^{-\delta\beta}}+\frac{e^{-(x-\delta) \beta}}{1-e^{-(x-\delta) \beta}}+\frac{e^{-(y-\delta) \beta}}{1-e^{-(y-\delta) \beta}}+\frac{e^{-(1-x-y+\delta) \beta}}{1-e^{-(1-x-y+\delta) \beta}}\right)\\
-\frac{1}{2}\bigg(\frac{\pi^2}{6}-\beta^2(x-\delta)(y-\delta)-Li_{2}(e^{-\beta\delta })-Li_{2}(e^{- \beta(x-\delta)})-Li_{2}(e^{- \beta(y-\delta)})-Li_{2}(e^{- \beta(1-x-y+\delta)})\bigg.\\
\bigg.-Li_{2}(e^{-\beta})+Li_{2}(e^{-\beta x })+Li_{2}(e^{- \beta y })+Li_{2}(e^{-\beta(1-x) })+Li_{2}(e^{-\beta(1-y)})\bigg)\\
+\frac{\beta\delta}{2}\big( \beta(x+y-2\delta)+\ln(1- e^{-(x-\delta) \beta})+\ln(1- e^{-(y-\delta) \beta})-\ln(1- e^{-\delta \beta})-\ln(1- e^{-(1-x-y+\delta) \beta})\big)\\
+\bigg(\frac{\beta^2}{2}(x+y)\delta-\beta^2xy-\frac{1}{2}\ln(1- e^{-\delta \beta})-\frac{x \beta}{2}\ln(1- e^{-(x-\delta) \beta})-\frac{y \beta}{2}\ln(1- e^{-(y-\delta) \beta})\bigg.\\
\bigg.-\frac{1+(1-x-y) \beta}{2}\ln(1- e^{-(1-x-y+\delta) \beta})+\frac{1+x \beta}{2}\ln(1- e^{-x \beta})+\frac{1+y\beta}{2}\ln(1- e^{-y \beta})\bigg.\\
\bigg.+\frac{1+(1-x)\beta}{2}\ln(1- e^{-(1-x)\beta})+\frac{1+(1-y)\beta}{2}\ln(1- e^{-(1-y)\beta})-\left(1+\frac{\beta}{2}\right)\ln(1- e^{-\beta})\bigg)\\
+\frac{1}{2}\ln(1- e^{-\beta})-\frac{1}{2}\ln(1- e^{-(x-\delta) \beta})-\frac{1}{2}\ln(1- e^{-(y-\delta) \beta})
\end{multline*}
When $\delta=h_{\beta}(x, y)$ we have
\begin{multline*}
\beta(x-\delta)+ \ln(1-e^{-(y-\delta) \beta})- \ln(1- e^{-\beta y}) +\ln(1-e^{-(1-y) \beta})-\ln(1- e^{-(1-x-y+\delta)\beta})=0.
\end{multline*}
\begin{align*}
1+ \frac{e^{-(y-\delta) \beta}}{1-e^{-(y-\delta) \beta}}+ \frac{e^{-(1-x-y+\delta) \beta}}{1-e^{-(1-x-y+\delta) \beta}}=\frac{1-e^{-(1-x) \beta}}{(1-e^{-(y-h_{\beta}(x, y)) \beta})(1-e^{-(1-x-y+h_{\beta}(x, y)) \beta})}:=u_{\beta}(x,y).
\end{align*}
\begin{multline*}
\frac{e^{-(y-\delta) \beta}}{1-e^{-(y-\delta) \beta}}+\frac{e^{-(1-x-y+\delta) \beta}}{1-e^{-(1-x-y+\delta) \beta}}-\frac{e^{-\beta y}}{1-e^{-\beta y}}-\frac{e^{-(1-y) \beta}}{1-e^{-(1-y) \beta}}\\
=\frac{e^{-\beta y}(1-e^{-\beta})(1-e^{-\beta x})}{e^{-\beta x}(1-e^{-\beta y})(1-e^{-\beta (1-x)})(1-e^{-\beta (1-y)})}:=v_{\beta}(x,y).
\end{multline*}
Combining the results \eqref{eq:Li2}, \eqref{eq:first-order-derivative}, \eqref{eq:second-order-derivative}, \eqref{eq:d-beta-xy} and \eqref{eq:c-beta-xy}, we have:
\begin{multline*}
\mathcal{M}_{N}^{q} \left( H_{x N, y N+\gamma \sqrt{N}}(\omega)=\delta N + \alpha \sqrt{N}\right)\\
=\frac{\sqrt{\beta}}{\sqrt{2\pi N}}\exp\left(\beta\gamma\alpha u_{\beta}(x,y)-\frac{\gamma^2\beta}{2}v_{\beta}(x,y)-\frac{ \sigma_{\beta}^2(x,y) }{2}\alpha^2+O\left(\frac{\alpha^3}{\sqrt{N}}\right)\right),
\end{multline*}
which implies \eqref{eq:K-gamma}.
\end{proof}

Let $r \in \N$, and let $0< y_1 < y_2 < \dots < y_r <1$ be arbitrary real numbers. We define a $r \times r$ symmetric matrix $C_{y_1,y_2, \dots, y_r}$ with entries
$$
C_{y_1,y_2, \dots, y_r}(i,i):= \frac{1}{  \sigma_{\beta} (x,y_i)^2}, \qquad 1 \le i \le r,
$$
$$
C_{y_1,y_2, \dots, y_r} (i,j):= \sqrt{\frac{ e^{-\beta} - e^{-\beta (1-y_i)} }{ \left( 1 -  e^{-\beta (1-y_i)} \right) \sigma_{\beta} (x,y_i)^2 \sigma_{\beta} (x,y_j)^2 }} , \qquad 1\le i < j \le r.
$$

The main goal of this section is to prove the following theorem.
\begin{theorem}
\label{th:CLT-multi}
Let $\alpha_1, \alpha_2, \dots, \alpha_r \in \R$, let $\beta \in \R_{>0}$, let $x \in (0;1)$, let $0< y_1 < y_2 < \dots < y_r <1$, let $q=1-\beta/N$, and let $w$ be the random permutation of size $N$ distributed according to the Mallows measure $\mathcal{M}_{N}^{q} $. One has
\begin{multline*}
\lim_{N \to \infty} \left( \sqrt{2 \pi N} \right)^r \sqrt{\det \left( C_{y_1,y_2, \dots, y_r} \right)}  \mathcal{M}_{N}^{q}  \left( H_{ xN, y_1 N }(\omega) = h_{\beta}(x, y_1) N + \alpha_1 \sqrt{N}, 
\right. \\ \left. H_{xN , y_2 N }(\omega) = h_{\beta}(x, y_2) N + \alpha_2 \sqrt{N}, \dots, H_{ xN , y_r N}(\omega) = h_{\beta}(x, y_r) N + \alpha_r \sqrt{N} \right) 
\\ = \exp \left( \frac{- (\alpha_1, \alpha_2, \dots, \alpha_r) C_{y_1,y_2, \dots, y_r}^{-1} (\alpha_1, \alpha_2, \dots, \alpha_r)^T}{2} \right). 
\end{multline*}

\end{theorem}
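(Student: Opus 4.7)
The plan is to reduce Theorem \ref{th:CLT-multi} to the one-point asymptotics of Lemma \ref{lem:K-gamma} via the multi-point exact formula of Proposition \ref{prop:MalHeightInfManyPoints}. As stated, that proposition concerns height functions with a fixed second argument $K$ and an increasing sequence of first arguments $L_i$, whereas Theorem \ref{th:CLT-multi} fixes $L = xN$ and varies $K_i = y_iN$. So first I would apply the inversion symmetry \eqref{property:symmetry}: the joint law of $\bigl(H_{xN, y_i N}(w)\bigr)_{i=1}^r$ equals that of $\bigl(H_{y_i N, xN}(w^{-1})\bigr)_{i=1}^r$, and $w^{-1}$ is Mallows distributed with the same parameter. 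Setting $y_0 := 0$, $\alpha_0 := 0$, and defining the increments $s_i := H_{y_i N, xN}(w^{-1}) - H_{y_{i-1}N, xN}(w^{-1}) = (h_\beta(x,y_i)-h_\beta(x,y_{i-1}))N + (\alpha_i-\alpha_{i-1})\sqrt{N}$, Proposition \ref{prop:MalHeightInfManyPoints} factorises the target probability as
\begin{equation*}
\prod_{i=1}^{r} \mathcal{M}_{N-y_{i-1}N}^{q}\Bigl( H_{(y_i-y_{i-1})N,\, xN-s_1-\dots-s_{i-1}} = s_i\Bigr).
\end{equation*}

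The $i$-th factor is a single-point Mallows probability in a smaller system of size $N_i := (1-y_{i-1})N$ with effective parameter $\beta_i := \beta(1-y_{i-1})$, since $q = 1-\beta/N = 1-\beta_i/N_i$. Rescaling to the intrinsic coordinates $\tilde{x}_i := (y_i-y_{i-1})/(1-y_{i-1})$, $\tilde{y}_i := (x-h_\beta(x,y_{i-1}))/(1-y_{i-1})$, one checks directly from \eqref{eq:mean} that the typical value of $s_i$ equals $h_{\beta_i}(\tilde{x}_i,\tilde{y}_i)\,N_i$, while the $K$-coordinate carries a fluctuation of $-\alpha_{i-1}\sqrt{N}$ and the height value carries $(\alpha_i-\alpha_{i-1})\sqrt{N}$. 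The expansion \eqref{eq:lnqsqrtNq} then applies to each factor verbatim as in Lemma \ref{lem:K-gamma}, producing a Gaussian asymptotic whose exponent is an explicit quadratic form in $(\alpha_{i-1},\alpha_i)$ with prefactor $(\sqrt{2\pi N})^{-1}$ times an explicit scalar depending on $\beta,x,y_{i-1},y_i$.

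Multiplying the $r$ factors and collecting the quadratic terms gives, up to $1+o(1)$ errors, an expression $(\sqrt{2\pi N})^{-r}\,\sqrt{\det Q}\,\exp\bigl(-\tfrac{1}{2}\vec{\alpha}^{T} Q \vec{\alpha}\bigr)$, where $Q$ is a symmetric tridiagonal matrix: its diagonal entry $Q_{ii}$ combines the $\alpha_i^2$ coefficients from the $i$-th and $(i+1)$-th factors, and its off-diagonal entry $Q_{i,i+1}$ is the $\alpha_i\alpha_{i+1}$ coefficient from the $(i+1)$-th factor. The remaining task is the algebraic identification $Q^{-1} = C_{y_1,\dots,y_r}$, which I expect to be the main obstacle. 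A clean way is to read the factorisation as saying that $(\alpha_1,\dots,\alpha_r)$ is asymptotically a Gaussian Markov chain with explicit conditional means and variances coming from Lemma \ref{lem:K-gamma}; then the off-diagonal entries $C(i,j) = \mathrm{Cov}(\alpha_i,\alpha_j)$ for $i<j$ can be read off as $\mathrm{Var}(\alpha_i)$ times the product of conditional regression slopes $\prod_{k=i+1}^{j}\mathbb{E}[\alpha_k \mid \alpha_{k-1}]/\alpha_{k-1}$. Both $\mathrm{Var}(\alpha_i) = 1/\sigma_\beta(x,y_i)^2$ (from Theorem \ref{thm:LLT}) and the individual slopes reduce to elementary manipulations of $e^{-\beta y_i}$ and $h_\beta(x,y_i)$, in the same spirit as Lemmas \ref{lem:Li2} and \ref{lem:lndelta}; the telescoping product then matches the claimed closed form of $C_{y_1,\dots,y_r}(i,j)$.
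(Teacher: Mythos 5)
Your proposal follows the paper's proof essentially step for step: inversion symmetry to put the events in the form required by Proposition \ref{prop:MalHeightInfManyPoints}, factorization into single-point probabilities in systems of size $(1-y_{i-1})N$ with effective parameter $\beta(1-y_{i-1})$ and rescaled coordinates, the consistency identity $h_{\beta(1-y_{i-1})}\bigl(\tfrac{x-h_{\beta}(x,y_{i-1})}{1-y_{i-1}},\tfrac{y_i-y_{i-1}}{1-y_{i-1}}\bigr)=\tfrac{h_{\beta}(x,y_i)-h_{\beta}(x,y_{i-1})}{1-y_{i-1}}$, and Lemma \ref{lem:K-gamma} applied to each factor with $\gamma=-\alpha_{i-1}/\sqrt{1-y_{i-1}}$. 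The only (cosmetic) difference is the final linear algebra: the paper identifies the covariance matrix via the explicit tridiagonal inverse of matrices of the form $\bigl(z_{\min(i,j)}\bigr)_{i,j}$ together with a determinant factorization, whereas you phrase the same computation as a Gaussian Markov chain with telescoping regression slopes.
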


\begin{proof}
We have
\begin{multline*}
\mathcal{M}_{N}^{q}  \left( H_{ xN, y_1 N }(\omega) = h_{\beta}(x, y_1) N + \alpha_1 \sqrt{N}, 
\right. \\ \left. H_{xN , y_2 N }(\omega) = h_{\beta}(x, y_2) N + \alpha_2 \sqrt{N}, \dots, H_{ xN , y_r N}(\omega) = h_{\beta}(x, y_r) N + \alpha_r \sqrt{N} \right) 
\\ =
\mathbb{P}_{N}^{q} \left[ H^{(N)}_{y_1 N, y_2 N, \dots, y_r N ;xN} = (h_{\beta}(x, y_1) N + \alpha_1 \sqrt{N} , h_{\beta}(x, y_2) N + \alpha_2 \sqrt{N}
\right. \\ \left. - h_{\beta}(x, y_1) N - \alpha_1 \sqrt{N},
\dots, h_{\beta}(x, y_r) N + \alpha_r \sqrt{N} - h_{\beta}(x, y_{r-1}) N - \alpha_{r-1} \sqrt{N}  ) \right].
\end{multline*}
By Propositon \ref{prop:MalHeightInfManyPoints} the expression above is equal to 
\begin{multline*}
\mathcal{M}_{N}^{q}  \left( H^{(N)}_{x N,y_1 N } = h_{\beta}(x, y_1) N + \alpha_1 \sqrt{N}  \right)
\\ \times \mathcal{M}_{N}^{q}  \left( H^{(N-y_1 N)}_{xN- h_{\beta}(x, y_1) N - \alpha_1 \sqrt{N}, y_2 N -y_1 N } = h_{\beta}(x, y_2) N + \alpha_2 \sqrt{N} - h_{\beta}(x, y_1) N - \alpha_1 \sqrt{N} \right)
\\ \times \cdots 
\\ \times \mathcal{M}_{N}^{q}  \left( H^{(N-y_{r-1} N)}_{xN- h_{\beta}(x, y_{r-1}) N - \alpha_{r-1} \sqrt{N}, y_{r} N -y_{r-1} N } = h_{\beta}(x, y_{r}) N + \alpha_{r} \sqrt{N} - h_{\beta}(x, y_{r-1}) N - \alpha_{r-1} \sqrt{N} \right)
\end{multline*}
Using Lemma \ref{lem:K-gamma}, we obtain that this expression has the following asymptotics as $N \to \infty$:
\begin{multline*}
\frac{\sigma_{\beta}(x,y_1)}{ \sqrt{2 \pi N} } \exp \left( -\frac{ \sigma_{\beta}^2(x,y_1) }{2} \alpha_1^2 \right) \frac{\sigma_{\beta (1-y_1)}( \frac{x - h_{\beta} (x,y_1)}{1-y_1},\frac{y_2-y_1}{1-y_1} ) }{ \sqrt{2 \pi (1-y_1) N} } \\ 
\times \exp \left( -\frac{ \sigma_{\beta (1-y_1)}^2( \frac{x- h_{\beta} (x,y_1)}{1-y_1},\frac{y_2-y_1}{1-y_1} ) }{2} \left( \frac{\alpha_2}{\sqrt{1-y_1}} -\mu_{\beta (1-y_1)} \left( \frac{x- h_{\beta} (x,y_1)}{1-y_1},\frac{y_2 - y_1}{1-y_1};\frac{-\alpha_1}{ \sqrt{1-y_1} } \right) \right)^2 \right) 
\\ \times \cdots
\\ \times \frac{\sigma_{\beta (1-y_{r-1})}( \frac{x - h_{\beta} (x,y_{r-1})}{1-y_{r-1}},\frac{y_r-y_{r-1}}{1-y_{r-1}} ) }{ \sqrt{2 \pi (1-y_{r-1}) N} } \exp \left( -\frac{ \sigma_{\beta (1-y_{r-1} )}^2( \frac{x- h_{\beta} (x,y_{r-1})}{1-y_{r-1}},\frac{y_{r}-y_{r-1}}{1-y_{r-1}} ) }{2} \left( \frac{\alpha_r}{\sqrt{1-y_{r-1}}} 
\right. \right. \\ \left. \left. -\mu_{\beta (1-y_{r-1})} \left( \frac{x- h_{\beta} (x,y_{r-1})}{1-y_{r-1}},\frac{y_{r} - y_{r-1}}{1-y_{r-1}};\frac{-\alpha_{r-1}}{ \sqrt{1-y_{r-1}} } \right) \right)^2 \right). 
\end{multline*}
Using the identities
$$
\det \left( C_{y_1,y_2, \dots, y_r} \right) = \frac{e^{-\beta} - e^{-\beta(1-y_1)}}{1 - e^{-\beta(1-y_1)}} \prod_{i=2}^{r} \left( \frac{e^{-\beta} - e^{-\beta(1-y_{i-1})}}{1 - e^{-\beta(1-y_{i-1})}} - \frac{e^{-\beta} - e^{-\beta(1-y_i)}}{1 - e^{-\beta(1-y_i)}} \right)
\prod_{i=1}^r \frac{1}{\sigma_{\beta}^2 (x,y_i)},
$$
$$
h_{\beta (1-y_{i-1})} \left( \frac{x-h_{\beta}(x, y_{i-1})}{1-y_{i-1}}, \frac{y_i-y_{i-1}}{1-y_{i-1}} \right) = \frac{h_{\beta} (x,y_i) - h_{\beta} (x,y_{i-1})}{1-y_{i-1}},
$$
and also the fact that for arbitrary variables $z_1, z_2, \dots, z_r$ a matrix $\tilde C := \left( z_{\min(i,j)} \right)_{i,j=1}^{r}$ has the tridiagonal inverse with entries
$$
\tilde C^{-1} (i,i)= \frac{1}{z_{i}-z_{i-1}}+\frac{1}{z_{i+1}-z_{i}}, \qquad 1 \le i \le r, \qquad z_0:=0, \ z_{r+1}:=\infty,
$$
$$
\tilde C^{-1} (i,i+1)= \tilde C^{-1} (i+1,i) = \frac{-1}{z_{i+1}-z_{i}}, \qquad 1 \le i \le r-1,
$$
after a direct calculation, one arrives at the statement of the theorem. 

\end{proof}

\begin{remark}
One can obtain a global multi-dimensional central limit theorem from this local one in an analogous way. More importantly, it is plausible that one can analyze the multi-dimensional fluctuations not only in points $(x,y_1), \dots, (x,y_r)$, as we are doing, but in arbitrary points inside the square $(0,1)^2$ via generalizations of Proposition \ref{prop:MalHeightInfManyPoints}. We do not address these questions in the present paper. 

\end{remark}


\begin{thebibliography}{100}

\bibitem[ABC21]{ABC21}
L. Addario-Berry, B. Corsini.
\newblock The height of Mallows trees.
\newblock{{\em The Annals of Probability}}, 2021, 49(5), 2220-2271.

\bibitem[AHHL21]{AHHL21}
O. Angel, A. Holroyd, T. Hutchcroft, A. Levy.
\newblock Mallows permutations as stable matchings.
\newblock{{\em Canadian Journal of Mathematics}}, 2021, 73(6), 1531–1555.

\bibitem[AK24]{AK24}
R. Adamczak, M. Kotowski.
\newblock The global and local limit of the continuous-time Mallows process.
\newblock {\em arXiv preprints}, 2024, arXiv:2404.08554.

\bibitem[And98]{And98}
G. E. Andrews.
\newblock The Theory of Partitions. 
\newblock {\em Cambridge Univ. Press}, 1998.

\bibitem[BB17]{BB17}
R. Basu, N. Bhatnagar.
\newblock Limit theorems for longest monotone subsequences in random Mallows permutations.
\newblock {\em Annales de l'Institut Henri Poincaré, Probabilités et Statistiques}, 2017, 53(4): 1934-1951.

\bibitem[BBHM05]{BBHM05}
I. Benjamini, N. Berger, C. Hoffman, E. Mossel.
\newblock Mixing times of the biased card shuffling and the asymmetric exclusion process.
\newblock {\em Transactions of the American Mathematical Society}, 2005, 357(8): 3013–3029.

\bibitem[BP15]{BP15}
N. Bhatnagar, R. Peled.
\newblock Lengths of monotone subsequences in a Mallows permutation.
\newblock {\em Probability Theory and Related Fields}, 2015, 161(3): 719-780.

\bibitem[BB24]{BB24}
A. Borodin, A.Bufetov.
\newblock ASEP via Mallows coloring
\newblock {\em arXiv preprints}, 2024, arXiv:2408.16585.

\bibitem[BC14]{BC14}
A. Borodin, I. Corwin. 
\newblock Macdonald processes.
\newblock {\em Probability Theory and Related Fields}, 2014, 158(1): 225-400.

\bibitem[BC24]{BC24}
A. Bufetov, K. Chen.
\newblock Mallows product measure.
\newblock {\em arXiv preprints}, 2024, arXiv:2402.09892.

\bibitem[BN22]{BN22}
A. Bufetov, P. Nejjar.
\newblock Cutoff profile of ASEP on a segment.
\newblock {\em Probability Theory and Related Fields}, 2022, 183(1): 229–253.

\bibitem[Buf20]{Buf20}
A. Bufetov.
\newblock Interacting particle systems and random walks on Hecke algebras.
\newblock {\em arXiv preprints}, 2020, arXiv:2003.02730.

\bibitem[Cor22]{Cor22}
B. Corsini.
\newblock Continuous-time Mallows processes.
\newblock {\em arXiv preprints}, 2022, arXiv:2205.04967.

\bibitem[DR00]{DR00}
P. Diaconis, A. Ram.
\newblock Analysis of systematic scan Metropolis algorithms using Iwahori-Hecke algebra techniques.
\newblock {\em Michigan Mathematical Journal}, 2000, 48(1): 157-190.

\bibitem[GK24]{GK24}
V. Gorin, R. Kenyon.
\newblock Six-vertex model with rare corners and random restricted permutations.
\newblock {\em arXiv preprints}, 2024, arXiv:2408.14446.

\bibitem[GO10]{GO10}
A. Gnedin, G. Olshanski. 
\newblock q-Exchangeability via quasi-invariance.
\newblock {\em The Annals of Probability}, 2010, 38(6): 2103-2135.

\bibitem[GO12]{GO12}
A. Gnedin, G. Olshanski.
\newblock The two-sided infinite extension of the Mallows model for random permutations.
\newblock {\em Advances in Applied Mathematics}, 2012, 48(5): 615-639.

\bibitem[GP18]{GP18}
A. Gladkich, R. Peled.
\newblock On the cycle structure of Mallows permutations.
\newblock {\em The Annals of Probability}, 2018, 46(2): 1114–1169.

\bibitem[HHL20]{HHL20}
A. Holroyd, T. Hutchcroft, A. Levy.
\newblock Mallows permutations and finite dependence.
\newblock {\em The Annals of Probability}, 2020, 48(1): 343–379.

\bibitem[HMV23]{HMV23}
J. He, T. Müller, T. Verstraaten.
\newblock Cycles in Mallows random permutations.
\newblock {\em Random Structures \& Algorithms}, 2023, 63(4): 1054-1099.

\bibitem[HR18]{HR18}
G. H. Hardy, S. Ramanujan.
\newblock Asymptotic Formulae in Combinatory Analysis.
\newblock {\em Proc. London Math. Soc.}, 1918, 17(2): 75–115.

\bibitem[HS23]{HS23}
J. He, D. Schmid.
\newblock Limit profile for the ASEP with one open boundary.
\newblock {\em arXiv preprint}, arXiv:2307.14941, 2023.

\bibitem[Kir95]{Kir95}
 A. Kirillov.
\newblock Dilogarithm identities.
\newblock {\em Progress of theoretical physics supplement}, 1995, 118: 61-142.

\bibitem[KKRW20]{KKRW20}
R. Kenyon, D. Kral, C. Radin, P. Winkler.
\newblock Permutations with fixed pattern densities.
\newblock {\em Random Structures \& Algorithms}, 2020, 56(1): 220-250.

\bibitem[Mal57]{Mal57}
C. L. Mallows.
\newblock Non-Null Ranking Models. I.
\newblock {\em Biometrika}, 1957, 44(1/2): 114-130.

\bibitem[MS13]{MS13}
C. Mueller, S. Starr.
\newblock The Length of the Longest Increasing Subsequence of a Random Mallows Permutation.
\newblock {\em Journal of Theoretical Probability}, 2013, 26(2): 514-540.

\bibitem[Moa84]{Moa84}
D. S. Moak.
\newblock The Q-analogue of Stirling’s formula.
\newblock {\em Rocky Mt. J. Math.}, 1984, 14(2), 403–414.

\bibitem[Muk16]{Muk16}
S. Mukherjee.
\newblock Estimation in exponential families on permutations.
\newblock {\em Ann. Statist. }, 2016, 44(2): 853 - 875.

\bibitem[Sta09]{Sta09}
S. Starr.
\newblock Thermodynamic limit for the Mallows model on $S_n$.
\newblock {\em Journal of mathematical physics}, 2009, 50(9):095208.

\bibitem[SW18]{SW18}
S. Starr, M. Walters.
\newblock Phase Uniqueness for the Mallows Measure on Permutations.
\newblock {\em Journal of mathematical physics}, 2018, 59:063301.

\bibitem[Wal15]{Wal15}
M. Walters.
\newblock Concentration of measure techniques and applications.
\newblock {\em Ph.D. thesis}, 2015.


\end{thebibliography}
\end{document}